\documentclass[12pt]{amsart}

\usepackage{cite}

\usepackage{amsfonts}
\usepackage{amssymb, latexsym, amsthm, amsmath, verbatim}

\setlength{\textwidth}{460pt} \setlength{\hoffset}{-55pt}

\theoremstyle{definition}
\newtheorem{Thm}{Theorem}[section]
\newtheorem{Prop}[Thm]{Proposition}
\newtheorem{Cor}[Thm]{Corollary}
\newtheorem{Lem}[Thm]{Lemma}

\newtheorem{Rmk}[Thm]{Remark}

\makeatletter
\def\imod#1{\allowbreak\mkern5mu{\operator@font mod}\,\,#1}
\makeatother

\allowdisplaybreaks[4]

\begin{document}

\title[On the Non-vanishing of Hilbert Poincar\'e Series]{On the Non-vanishing of Hilbert Poincar\'e Series}
\author{Mingkuan Zhang and Yichao Zhang$^\star$}
\address{School of Mathematics, Harbin Institute of Technology, Harbin 150001, P. R. China}
\email{19b912043@stu.hit.edu.cn}
\address{Institute for Advanced Study in Mathematics of HIT and School of Mathematics, Harbin Institute of Technology, Harbin 150001, P. R. China}
\email{yichao.zhang@hit.edu.cn}
\date{}
\subjclass[2010]{Primary: 11F41, 11F30, 11F60}
\keywords{Hilbert modular form, Poincar\'e series, Selberg's identity, Non-vanishing}
\thanks{$^{\star}$ partially supported by a grant of National Natural Science Foundation of China (no. 11871175)}

\begin{abstract} We prove that if $\nu$ has small norm with respect to the level and the weight, the $\nu$-th Hilbert Poincar\'e series does not vanish identically. We also prove Selberg's identity on Kloosterman sums in the case of number fields, which implies certain vanishing and non-vanishing relations of Hilbert Poincar\'e series when the narrow class number is $1$. Finally, we pass to the adelic setting and interpret the problem via Hecke operators.
\end{abstract}

\maketitle
\newcommand{\Z}{{\mathbb Z}} 
\newcommand{\Q}{{\mathbb Q}} 

\section{Introduction and Main Results}
It is conjectured that the Poincar\'e series $P_{k,m}$ of level one and weight $k$ does not vanish identically for each positive integer $m$, provided that the corresponding space of cuspforms is not trivial. This is known as the Poincar\'e Conjecture on Poincar\'e series. For fixed positive integers $m$ and $k$, it is well-known that $P_{k,m}$ vanishes identically if and only if the $m$-th Hecke operator $T_m$ of weight $k$ vanishes identically. As a special case, when $k=12$, the Poincar\'e Conjecure is equivalent to the famous Lehmer's conjecture \cite{L}, the latter of which claims that Ramanujan's $\tau$-function is non-vanishing everywhere. It is remarkable that the irrationality of the coefficients of certain harmonic Maass forms implies the non-vanishing of $\tau(m)$, as shown by Ono in \cite{20}. Lehmer \cite{L} proved that the smallest positive integer $m$ with $\tau(m)=0$ is a prime number, and Serre \cite{19} obtained the first asymptotic upper bound for the number of primes with $\tau(p)=0$ and raised the problem of seeking more precise statements. For more results along this line of research, see \cite{B} and \cite{K1} and the references therein. 

By estimating the $J$-Bessel function and the Kloosterman sums, Rankin \cite{R} obtained an important result towards the Poincar\'e Conjecture: there exist constants $B>4 \log 2$ and $k_{0}$ such that the Poincar\'e series $P_{k,m}$ does not vanish identically whenever
$$m \leq k^{2} \exp \left(\frac{-B \log k}{\log \log k}\right)\quad \text{and}\quad k \geq k_{0}.$$ Later, Lehner \cite{Lehn} and Mozzochi \cite{7} generalized Rankin's result to more general Fuchsian groups. For any subgroup of finite index in $\text{SL}_{2}(\mathbb{Z})$, using Weil's estimate for the Kloosterman sums,  Gaigalas \cite{G} showed that for any fixed positive integer $m$, there exist infinitely many weights $k$ such that the $m$-th Poincar\'e series of weight $k$ is not identically zero. 

It is natural to consider the Poincar\'e Conjecture for Hilbert Poincar\'e series, passing from the elliptic case to the case of Hilbert modular forms. Kumari \cite{9} gave an answer to this question in both of the weight aspect and the level aspect. For example, when the level is fixed, he proved that the $\nu$-th Hilbert Poincar\'e series does not vanish identically for large enough weight. Note that in a particular case this is a generalization of Gaigalas's result to Hilbert Poincar\'e series \cite{G}. The key ingredient in Kumari's proof is that the Fourier coefficients of Hilbert Poincar\'e series satisfy certain orthogonality relations with respect to both of the weight and the level. Such orthogonality generalizes the work of Kowalski, Saha and Tsimerman \cite{K} and is of independent interest. 

Let $k\geq 4$ be even. In this paper, following Rankin's original treatment in \cite{R}, we first work on the generalized Kloosterman sums and the $J$-Bessel function as in \cite{10} to estimate the Fourier coefficients of Hilbert Poincar\'e series, and then obtain our non-vanishing condition on Hilbert Poincar\'e series. Explicitly, for a nonzero integral ideal $\mathfrak{c}$, we prove that if $\mu\in\mathfrak{c}^+$ satisfies 
$$|N(\mu)| < C(k-1)^{\frac{2nk-2n}{2k-1}} N(\mathfrak{cn})^{\frac{2k-3}{2k-1}}$$
with $C$ being an effective positive constant depending on $F$ only, then the $\mu$-th Hilbert Poincar\'e series $P_{\mu}(z ; k, \mathfrak{c},\mathfrak{n})$ of weight $k$ and level $\mathfrak{n}$ is not identically zero. See Theorem \ref{Thm-Bound} and Corollary \ref{Non-Vanishing} for the accurate statements. A similar statement holds for the congruence subgroups $\Gamma_0(\mathfrak{n})\subset \textrm{SL}_2(\mathcal{O}_F)$, and by letting $k$ or $N(\mathfrak{n})$ approach $\infty$ this recovers Kumari's result \cite{9} in the case of parallel weight. In Section 4, we prove Selberg's identity on Kloosterman sums over general number fields; this is Theorem \ref{Selberg's identity}. When the narrow class number of $F$ is $1$, Selberg's identity applies to obtain certain additive property of the Fourier coefficients and hence certain non-vanishing relations of Hilbert Poincar\'e series (see Theorem \ref{relation Poincare}). Finally, in order to relate the Hecke operators to Hilbert Poincar\'e series as did by Rankin for the elliptic case in \cite{R}, we consider ad\'elic Hilbert Poincar\'e sereis in the last section and obtain certain vanishing relations.

\section{Hilbert Poincar\'e Series}
In this section, we briefly introduce the basics on Hilbert Poinar\'e series and set up the related notations.

Let $F$ be a totally real number field of degree $n>1$ over $\mathbb{Q}$, with ring of integers $\mathcal{O}=\mathcal{O}_{F}$, different $\mathfrak{d}$, discriminant $D$, distinct real embeddings $\sigma_{1}, \ldots, \sigma_{n}$, and narrow class number $h^+$. For an element $a\in F$, set $a_i=\sigma_i(a)$, the $i$-th component of $a$ under the embedding $F\subset F_\mathbb{R}=F\otimes_\mathbb{Q}\mathbb{R}$. For a subset $S\subset F$, set $S^+$ to be the subset of totally positive elements of $S$, that is, 
\[S^+=\{a\in S: a_i>0, i=1,\ldots, n\}.\]
In particular, $\mathcal{O}^{\times +}$ is the set of totally positive units of $\mathcal{O}$. For ease of notation, from an abelian group or a quotient space of an abelian group $S$, we remove $0$ or the orbit of $0$ and have $S^*$. We shall reserve $A=A(F)>1$ for a constant such that each class of $F^\times/\mathcal{O}^{\times+}$ contains a representative $\mu$ such that $A^{-1}N(\mu)^{\frac{1}{n}}\leq |\mu_i|\leq AN(\mu)^{\frac{1}{n}}$ for each $i$.

Let $\mathfrak{c}$ be a fractional ideal and $\mathfrak{n}$ be a nonzero integral ideal of $F$, and $\Gamma_0(\mathfrak{c}, \mathfrak{n})$ be the congruence subgroup of level $\mathfrak{n}$ defined by
$$\Gamma_0(\mathfrak{c}, \mathfrak{n})=\left\{\begin{pmatrix}
a&b\\c&d
\end{pmatrix}: a,d\in\mathcal{O}, b\in (\mathfrak{cd})^{-1}, c\in\mathfrak{cnd}, ad-bc\in\mathcal{O}^{\times +}\right\}.$$ 

Throughout this section, denote $\Gamma=\Gamma_0(\mathfrak{c}, \mathfrak{n})$.
It is well-known that $\Gamma$ acts discontinuously on $\mathbb{H}^{n}$  with finite covolume via the componentwise M\"obius transformation. For a positive integer $k$, denote by $M_{k}\left(\Gamma\right)$ the space of Hilbert modular forms of parallel weight $(k, \ldots, k)$, level $\Gamma$ and with trivial character, i.e. the space of holomorphic functions $f(z)$ on $\mathbb{H}^{n}$ such that for $\gamma=\begin{pmatrix}
a&b\\c&d
\end{pmatrix} \in \Gamma, f(\gamma(z))=N(c z+d)^{k} f(z)$. Here for $z=\left(z_{1}, \ldots, z_{n}\right) \in \mathbb{H}^{n}$,
$$N(c z+d)^{k}=\prod_{i=1}^{n}\left(c_i z_{i}+d_i\right)^{k}.$$
According to Koecher's principle \cite{F}, each $f$ in $M_{k}\left(\Gamma\right)$ has the following Fourier expansion at $\infty$
$$f(z)=\sum_{\nu \in \mathfrak{c}^{+}\cup\{0\}} a(\nu) \exp (2 \pi i \operatorname{Tr}(\nu z)),$$
where $\operatorname{Tr}(\nu z)=\sum_{i=1}^{n} \nu_i z_{i} $. Since $f$ is invariant under $\begin{pmatrix}
\varepsilon&0\\0&1
\end{pmatrix}$ for $\varepsilon\in\mathcal{O}^{\times +}$, we have $a\left(\varepsilon \nu\right)=a(\nu)$. The Fourier expansion of $f$ at other cusps can be defined and if the constant term $a(0)=0$ at each cusp, $f$ is called a cusp form and the corresponding space is denoted by $S_k(\Gamma)$. The Petersson inner product on $S_{k}(\Gamma)$ is defined by
$$\left\langle f, h\right\rangle=\int_{\Gamma \backslash \mathbb{H}^{n}} f(z) \overline{h}(z)N(y)^{k}d\mu(z),\quad d\mu(z)=\prod_i\frac{d x_{i} d y_{i}}{y_i^{2}}.$$

For each $\mu\in\mathfrak{c}^+$, the Hilbert $\mu$-th Poincar\'e series for $\Gamma$ is defined as
$$P_{\mu}(z ; k, \mathfrak{c},\mathfrak{n}) :=\sum_{\gamma \in Z(\Gamma)N(\Gamma) \backslash \Gamma} N(cz+d)^{-k} e(\mu \gamma z),\qquad \gamma=\begin{pmatrix}
a&b\\c&d
\end{pmatrix},$$ where
\[Z(\Gamma)=\left\{\begin{pmatrix}
\varepsilon&0\\0&\varepsilon
\end{pmatrix}: \varepsilon \in \mathcal{O}^\times\right\},\quad N(\Gamma)=\left\{\begin{pmatrix}
1&b\\0&1
\end{pmatrix}: b \in (\mathfrak{cd})^{-1}\right\}.\] 
The Hilbert Poincar\'e series converges absolutely for $k > 2$ and belongs to $S_k(\Gamma)$, and we shall assume throughout that $k>2$ and is even. 

For any $\varepsilon\in\mathcal{O}^{\times+}$ and $\mu\in\mathfrak{c}^+$, since $\begin{pmatrix}
\varepsilon&0\\0&1
\end{pmatrix}$ normalizes $Z(\Gamma)N(\Gamma)$, it is easy to see that
$P_{\varepsilon\mu}(z ; k, \mathfrak{c},\mathfrak{n})=P_{\mu}(z ; k, \mathfrak{c},\mathfrak{n})$. Therefore, we may consider $\mu\in\mathfrak{c}^+/\mathcal{O}^{\times+}$, and hence we can choose representatives $\mu$ such that $A^{-1}N(\mu)^{\frac{1}{n}}\leq \mu_i\leq AN(\mu)^{\frac{1}{n}}$ for each $i$.

For $f(z)=\sum_{\nu \in \mathfrak{c}^{+}} a(\nu) e(\nu z)\in S_{k}(\Gamma)$, by the definition and the unfolding trick, we have
$$\left\langle f, P_{\mu}(\cdot ; k,\mathfrak{c},\mathfrak{n})\right\rangle=a(\mu)N(\mathfrak{cd})^{-1} N(\mu)^{1-k} \sqrt{D}\left((4 \pi)^{1-k} \Gamma(k-1)\right)^{n}.$$
Consequently, the Hilbert Poincar\'e series span $S_{k}(\Gamma)$. Moreover, by taking the Petersson norm with itself, we see that the
Hilbert Poincar\'e sereis $P_{\mu}(z ; k, \mathfrak{c},\mathfrak{n})$ is identically zero if and only if its $\mu$-th Fourier coefficient vanishes.

It is easy to see that
$$P_{\mu}(z ; k, \mathfrak{c},\mathfrak{n})=\sum_{(c, d)}\sum_{\varepsilon\in\mathcal{O}^{\times +}}\quad N(c z+d)^{-k} e\left(\varepsilon\mu \gamma z\right),$$
where $(c,d)$ runs through all non-associated coprime pairs in $\mathfrak{cnd}\times\mathcal{O}$, and $\gamma \in \Gamma$ is any element with the second row equal to the pair $(c,d)$. Here that $c,d$ are coprime if $c(\mathfrak{cd})^{-1}+d\mathcal{O}=\mathcal{O}$, and recall that pairs $\left(c_{1}, d_{1}\right)$ and $\left(c_{2}, d_{2}\right)$ are associated if there exists a unit $\varepsilon \in \mathcal{O}^\times$ such that $\left(c_{1}, d_{1}\right)=\varepsilon\left(c_{2}, d_{2}\right)$. From this and by the Poisson summation formula, for $\nu\in\mathfrak{c}^+$, the $\nu$-th coefficient $c_k(\nu,\mu)=c_k(\nu,\mu;\mathfrak{c},\mathfrak{n})$ of $P_{\mu}(z;k, \mathfrak{c},\mathfrak{n})$ equals to
\begin{align*}
c_k(\nu,\mu)&=\chi_{\mu}(\nu)+ \left(\frac{N(\nu)}{N(\mu)}\right)^{\frac{k-1}{2}}\frac{(2\pi)^n(-1)^{nk/2}N(\mathfrak{cd})}{D^{1 / 2}}\\
&\qquad \sum_{\varepsilon\in \mathcal{O}^{\times +}}\sum_{c\in(\mathfrak{cnd}/\mathcal{O}^{\times})^{\ast}}\frac{ S_{c(\mathfrak{cd})^{-1}}(\nu, \varepsilon\mu  ; c)}{|N(c)|}NJ_{k-1} \left(\frac{4 \pi \sqrt{\nu\varepsilon\mu}}{|c|}\right).
\end{align*}
Here the representatives $c$ of $(\mathfrak{cnd}/\mathcal{O}^{\times})^{\ast}$ can be chosen such that $A^{-1} |N(c)|^{\frac{1}{n}}\leq |c_i|\leq A |N(c)|^{\frac{1}{n}}$, $i=1,\cdots ,n$, $\chi_{\mu}$ is the characteristic function of the set $\left\{\mu \varepsilon : \varepsilon \in \mathcal{O}^{\times+}\right\}$ and \[NJ_{k-1}\left(\frac{4 \pi \sqrt{\nu\varepsilon\mu}}{|c|}\right)=\prod_{i=1}^{n} J_{k-1} \left(\frac{4 \pi \sqrt{\nu_i\varepsilon_i\mu_i}}{|c_i|}\right).\] Moreover, for any nonzero integral ideal $\mathfrak{m}$, $c\in F^\times$ and $\nu,\mu\in c(\mathfrak{md})^{-1}$, the Kloosterman sum is defined as
$$S_{\mathfrak{m}}(\nu, \mu  ; c) :=\sum_{x\in(\mathcal{O}/\mathfrak{m})^{\times}}e\left(\frac{\nu x+\mu x^{-1}}{c}\right),\quad xx^{-1} \equiv 1 \bmod \mathfrak{m}.$$ Here $e(\nu)=\exp(2\pi i\textrm{Tr}(\nu))$.  When $c\in\mathcal{O}$ and $\mathfrak{m}=(c)$, we shall simply write $S(\nu,\mu;c)=S_{(c)}(\nu,\mu;c)$.

\section{Non-Vanishing Condition}

In this section, we estimate the Fourier coefficient 
$$c_k(\mu,\mu)= 1+\frac{(2\pi)^n(-1)^{nk/2}N(\mathfrak{cd})}{D^{1 / 2}}\sum_{\varepsilon\in \mathcal{O}^{\times +}}\sum_{c\in(\mathfrak{cnd}/\mathcal{O}^{\times})^{\ast}}\frac{ S_{c(\mathfrak{cd})^{-1}}(\nu, \varepsilon\mu  ; c)}{|N(c)|}NJ_{k-1} \left(\frac{4 \pi \sqrt{\varepsilon\mu^2}}{|c|}\right)$$
to get the non-vanishing condition on Hilbert Poincar\'e series. Since $P_{\mu}(z ; k, \mathfrak{c},\mathfrak{n})=P_{\varepsilon\mu}(z ; k, \mathfrak{c},\mathfrak{n})$ for any $\varepsilon\in\mathcal{O}^{\times +}$, we shall choose $\mu$ such that $A^{-1}|N(\mu)|^{\frac{1}{n}}\leq |\mu_i|\leq A |N(\mu)|^{\frac{1}{n}}$, $i=1,\cdots ,n$.

We first prove the following upper bound for $S_{\mathfrak{m}}(\nu, \mu ; c)$, which is slightly more general than Theorem 10 of \cite{11}. Let $\mathfrak{m}$ be any nonzero integral ideal, $c\in F^\times$ and $\nu,\mu\in c(\mathfrak{md})^{-1}$. Denote by $pr(\mathfrak{m})$ the number of distinct prime ideals that divide $\mathfrak{m}$. Let $f_\mathfrak{p}=f(\mathfrak{p}/p)$ be the inertial degree at the prime ideal $\mathfrak{p}$ over $p$ and set $f=\sum_{\mathfrak{p}\mid 2}f_\mathfrak{p}$. Moreover, let $v_\mathfrak{p}$ be the valuation at $\mathfrak{p}$ and set
\[N_{\nu, \mu}(\mathfrak{m})=\prod_{\mathfrak{p}\mid \mathfrak{m}} N(\mathfrak{p})^{\min \left\{v_{\mathfrak{p}}(\nu), v_{\mathfrak{p}}\left(\mu\right), v_{\mathfrak{p}}(\mathfrak{m})-v_{\mathfrak{p}}(\mathfrak{d})\right\}}.\]

\begin{Prop}\label{Kloosterman sum ideal}
We have
$$\left| S_{\mathfrak{m}}(\nu, \mu ; c) \right| \leq 2^{n+f / 2} \sqrt{\left|D\right|} \sqrt{N_{\nu, \mu}(\mathfrak{m})} 2^{pr(\mathfrak{m})} \sqrt{N(\mathfrak{m})}.$$
\end{Prop}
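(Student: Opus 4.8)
The plan is to reduce to prime power moduli by a twisted multiplicativity coming from the Chinese Remainder Theorem, and then to evaluate or bound the local sums, with the primes above $2$ requiring separate care. Writing $\alpha=\nu/c$ and $\beta=\mu/c$, both of which lie in $(\mathfrak{md})^{-1}=\mathfrak{m}^{-1}\mathfrak{d}^{-1}$ by the hypothesis $\nu,\mu\in c(\mathfrak{md})^{-1}$, the sum becomes $\sum_{x\in(\mathcal{O}/\mathfrak{m})^{\times}}e(\alpha x+\beta x^{-1})$, which is well defined since $e$ is trivial on $\mathfrak{d}^{-1}$; in this intrinsic form $c\in F^\times$ need not be integral. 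If $\mathfrak{m}=\mathfrak{m}_1\mathfrak{m}_2$ with $\mathfrak{m}_1+\mathfrak{m}_2=\mathcal{O}$, I would pick $u_1,u_2\in\mathcal{O}$ with $u_1+u_2\equiv 1\bmod\mathfrak{m}$ and $u_j\equiv\delta_{ij}\bmod\mathfrak{m}_i$, write $x=u_1y+u_2z$ and check $x^{-1}\equiv u_1y^{-1}+u_2z^{-1}\bmod\mathfrak{m}$ using $u_j^2\equiv u_j$ and $u_1u_2\equiv 0$. Since $e$ is additive in its argument, $e(\alpha x)=e(\alpha u_1 y)\,e(\alpha u_2 z)$, and as $\alpha u_1\in\mathfrak{m}_1^{-1}\mathfrak{d}^{-1}$, $\alpha u_2\in\mathfrak{m}_2^{-1}\mathfrak{d}^{-1}$, the sum factors as a product of two sums of the same shape to moduli $\mathfrak{m}_1,\mathfrak{m}_2$, with data $(\alpha u_1,\beta u_1)$ and $(\alpha u_2,\beta u_2)$. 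Because $u_j$ is a local unit at the primes dividing $\mathfrak{m}_j$, the valuations entering $N_{\nu,\mu}$ are unchanged, so $2^{pr(\mathfrak{m})}$, $\sqrt{N(\mathfrak{m})}$ and $\sqrt{N_{\nu,\mu}(\mathfrak{m})}$ all build up multiplicatively and it suffices to treat $\mathfrak{m}=\mathfrak{p}^a$.

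For $\mathfrak{m}=\mathfrak{p}^a$ I would first record that the character $x\mapsto e(\alpha x)$ on $\mathcal{O}/\mathfrak{p}^a$ is governed by the local different: it is trivial on $\mathfrak{p}^j$ exactly when $v_\mathfrak{p}(\alpha)+j\ge -v_\mathfrak{p}(\mathfrak{d})$, which is what produces the third term $v_\mathfrak{p}(\mathfrak{m})-v_\mathfrak{p}(\mathfrak{d})$ in the minimum defining $N_{\nu,\mu}$. When $a=1$ the ring $\mathcal{O}/\mathfrak{p}$ is the finite field $\mathbb{F}_{N(\mathfrak{p})}$, and after identifying this character with an additive character of that field through the residue trace, the sum is a genuine Kloosterman sum over a finite field; Weil's bound yields $2\sqrt{N(\mathfrak{p})}$, the factor $2$ being absorbed into $2^{pr(\mathfrak{m})}$, while the degenerate cases in which one character becomes trivial reduce to Ramanujan-type sums and contribute to $N_{\nu,\mu}$.

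For $a\ge 2$ and $\mathfrak{p}$ odd I would complete the square: substituting $x=x_0(1+\varpi^{\lceil a/2\rceil}t)$ for a local uniformizer $\varpi$ and a representative $x_0$ makes the phase affine or quadratic in $t$, and summing over $t$ leaves a Gauss sum whose modulus is a power of $N(\mathfrak{p})$. Collecting these contributions over the admissible base points $x_0$ gives the prime power analogue of the classical Salié evaluation, namely a bound of the shape $2\,\sqrt{N_{\nu,\mu}(\mathfrak{p}^a)}\,\sqrt{N(\mathfrak{p}^a)}$, with the greatest-common-divisor factor recording how deeply $\nu,\mu$ are divisible by $\mathfrak{p}$ relative to the different.

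The main obstacle is the dyadic case $\mathfrak{p}\mid 2$, where completing the square is unavailable because one cannot divide by $2$; here I would instead estimate the quadratic exponential sum over $\mathcal{O}/\mathfrak{p}^a$ directly (or expand to one more level), which costs a further bounded factor and is the source of the global term $2^{f/2}$ with $f=\sum_{\mathfrak{p}\mid 2}f_\mathfrak{p}$. Finally, reassembling the prime power estimates through the multiplicativity of the first step, the remaining global constants $2^n\sqrt{|D|}$ arise when passing between the global pairing $e(\cdot)=\exp(2\pi i\,\mathrm{Tr}(\cdot))$ and the local computations: $\sqrt{|D|}=\sqrt{N(\mathfrak{d})}$ serves as a crude global majorant for the accumulated local different contributions to the Gauss sums, and $2^n$ dominates the number of local square roots entering the completion of the square across the $n$ embeddings. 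Majorizing these accumulated factors by $2^{n+f/2}\sqrt{|D|}$ then yields the stated inequality.
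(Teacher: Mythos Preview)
Your overall architecture is correct and is exactly what underlies the paper's proof: factor via CRT into prime-power moduli, apply Weil at the primes, complete the square (Sali\'e) at higher odd prime powers, and treat $\mathfrak{p}\mid 2$ separately. The paper, however, does not carry this out by hand; it simply invokes Proposition~9 of Bruggeman--Miatello, which packages precisely these local estimates as
\[
\Bigl|\prod_{\mathfrak{p}} S(\psi_\mathfrak{p},\phi_\mathfrak{p},\mathfrak{p}^{v_\mathfrak{p}(\mathfrak{m})})\Bigr|
\le \prod_{\mathfrak{p}\mid\mathfrak{m}} c_\mathfrak{p}\, N(\mathfrak{p})^{\,v_\mathfrak{p}(\mathfrak{m})-N_\mathfrak{p}/2},
\]
with $c_\mathfrak{p}=2$ for $\mathfrak{p}\nmid 2$ and $c_\mathfrak{p}=2\,N(\mathfrak{p})^{\,v_\mathfrak{p}(2)+1/2}$ for $\mathfrak{p}\mid 2$, and then performs the elementary bookkeeping.

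Where your proposal goes off track is in the accounting for the global constants. Writing out $v_\mathfrak{p}(\mathfrak{m})-N_\mathfrak{p}/2=\tfrac12 v_\mathfrak{p}(\mathfrak{m})+\tfrac12 v_\mathfrak{p}(\mathfrak{d})+\tfrac12\min\{v_\mathfrak{p}(\nu),v_\mathfrak{p}(\mu),v_\mathfrak{p}(\mathfrak{m})-v_\mathfrak{p}(\mathfrak{d})\}$ and multiplying over $\mathfrak{p}\mid\mathfrak{m}$ immediately gives $\sqrt{N(\mathfrak{m})}\cdot\sqrt{N_{\nu,\mu}(\mathfrak{m})}$ times a factor bounded by $\sqrt{N(\mathfrak{d})}=\sqrt{|D|}$; so your identification of $\sqrt{|D|}$ with the accumulated local different contributions is correct. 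But your explanation of $2^n$ as ``the number of local square roots across the $n$ embeddings'' is wrong: nothing archimedean enters this residue-ring computation, and the (at most two) square roots per odd prime are exactly what produce the $2^{pr(\mathfrak{m})}$. The factor $2^n$ is entirely dyadic: it comes from $\prod_{\mathfrak{p}\mid 2} N(\mathfrak{p})^{\,v_\mathfrak{p}(2)} = 2^{\sum_{\mathfrak{p}\mid 2} e_\mathfrak{p} f_\mathfrak{p}}=2^n$, while the companion $\prod_{\mathfrak{p}\mid 2} N(\mathfrak{p})^{1/2}=2^{f/2}$ gives the $2^{f/2}$. Once you correct that attribution, your sketch assembles into the stated bound.
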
 
\begin{proof} 
Define two characters $\psi,\phi: (\mathcal{O}/\mathfrak{m})^{\times} \rightarrow \mathbb{C}^{\times}$ by 
$$\psi(h)=e(\nu h/c),\quad \phi(h)=e(\mu h/c).$$
By Proposition 9 of \cite{11}, we have 
$$\left| \sum_{h\in(\mathcal{O}/\mathfrak{m})^{\times}}e\left(\frac{\nu h + \mu h^{-1}}{c}\right) \right|=\left| \prod_{\mathfrak{p}} S\left(\psi_\mathfrak{p},\phi_\mathfrak{p},\mathfrak{p}^{v_{\mathfrak{p}}(\mathfrak{m})}\right) \right|
\leq \prod_{\mathfrak{p}\mid\mathfrak{m}} c_{\mathfrak{p}} N(\mathfrak{p})^{v_{\mathfrak{p}}(\mathfrak{m})-\frac{N_{\mathfrak{p}}}{2}},$$
where $N_{\mathfrak{p}}=\max \left\{0,-v_{\mathfrak{p}}(\nu)-v_{\mathfrak{p}(\mathfrak{d})}+v_{\mathfrak{p}}(\mathfrak{m}),-v_{\mathfrak{p}}\left( \mu\right)-v_{\mathfrak{p}(\mathfrak{d})}+v_{\mathfrak{p}}(\mathfrak{m})\right\}$, $c_{\mathfrak{p}}=2$ if $2\nmid N(\mathfrak{p})$ and $c_{\mathfrak{p}}=2N(\mathfrak{p})^{v+\frac{1}{2}}$ if $v=v_{\mathfrak{p}}(2)\geq 1$. Then the upper bound follows easily.
\end{proof}

\begin{Thm}\label{Thm-Bound}
Let $\mathfrak{c}$ be an integral ideal, and $\mu\in\mathfrak{c}^+$. For any $0 <\eta<1$, if 
$$|N(\mu)|< C\left((k-1)^{n-nk}N(\mathfrak{cn})^{-k+1+\eta} \right)^{-1/(k-\frac{1}{2})},$$
then $P_{\mu}(z ; k, \mathfrak{c},\mathfrak{n})$ is not identically zero. Here $C$ is an effective positive constant depending on $F$ and $\eta$ only.
\end{Thm}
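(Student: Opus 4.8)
The plan is to use the criterion, recalled just after the unfolding formula, that $P_{\mu}(z;k,\mathfrak{c},\mathfrak{n})$ vanishes identically if and only if its diagonal coefficient $c_k(\mu,\mu)$ is zero. Writing $c_k(\mu,\mu)=1+E$, where $E$ is the double sum over $\varepsilon\in\mathcal{O}^{\times+}$ and $c\in(\mathfrak{cnd}/\mathcal{O}^\times)^\ast$ displayed at the start of this section, it therefore suffices to prove $|E|<1$ under the stated hypothesis. Throughout I fix the balanced representatives for $\mu$ and for $c$ afforded by the constant $A=A(F)$, so that $|\mu_i|\asymp_F|N(\mu)|^{1/n}$ and $|c_i|\asymp_F|N(c)|^{1/n}$ for every $i$.

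First I would bound the Bessel factor. Using the classical inequality $|J_{k-1}(x)|\le (x/2)^{k-1}/\Gamma(k)$ in each of the $n$ components and the crucial fact that a totally positive unit has $N(\varepsilon)=1$, the product $NJ_{k-1}(4\pi\sqrt{\varepsilon\mu^2}/|c|)$ is dominated by the $\varepsilon$-independent quantity $(2\pi)^{n(k-1)}N(\mu)^{k-1}\big(\Gamma(k)^n|N(c)|^{k-1}\big)^{-1}$. For the arithmetic factor I would feed in Proposition \ref{Kloosterman sum ideal} with $\mathfrak{m}=c(\mathfrak{cd})^{-1}$, noting that $N_{\mu,\varepsilon\mu}(\mathfrak{m})=N_{\mu,\mu}(\mathfrak{m})$ since $\varepsilon$ is a unit, and then using the two elementary bounds $N_{\mu,\mu}(\mathfrak{m})\le |N(\mu)|$ (because $\mu$ is integral) and $2^{pr(\mathfrak{m})}\ll_{\eta}N(\mathfrak{m})^{\eta}$; here $N(\mathfrak{m})=|N(c)|/N(\mathfrak{cd})$. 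The first of these is exactly what upgrades the exponent of $|N(\mu)|$ from $k-1$ to $k-\tfrac12$, matching the shape of the hypothesis.

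The two summations are then handled separately. The sum over units, $\Sigma_c:=\sum_{\varepsilon}|NJ_{k-1}(4\pi\sqrt{\varepsilon\mu^2}/|c|)|$, is the delicate point: the monomial bound above is independent of $\varepsilon$ and hence sums to infinity, so here one genuinely needs the decay of the $J$-Bessel function for large argument, together with the constraint $N(\varepsilon)=1$, which forces the arguments to be small in some component whenever they are large in another. Using the estimates for $J_{k-1}$ from \cite{10} and the balanced representatives, I expect to bound $\Sigma_c\ll_{F}C^{k}N(\mu)^{k-1}\big(\Gamma(k)^n|N(c)|^{k-1}\big)^{-1}$, the unit-lattice sum contributing only an $O_F(C^{k})$ factor. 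The remaining sum over $c$ then has summand $\ll|N(c)|^{-(k-1/2-\eta)}$; since every nonzero $c\in\mathfrak{cnd}$ satisfies $|N(c)|\ge N(\mathfrak{cnd})$ and the corresponding counting function grows linearly, this sum converges and is $\ll_F N(\mathfrak{cnd})^{-(k-1/2-\eta)}$, dominated by the terms of minimal norm.

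Assembling the pieces, the powers of $|N(\mu)|$ combine as $N(\mu)^{k-1}\cdot|N(\mu)|^{1/2}=|N(\mu)|^{k-1/2}$, and the level factors reduce, after absorbing powers of the discriminant and of $N(\mathfrak{n})$ into the constant, to $N(\mathfrak{cn})^{-(k-1-\eta)}$. An application of Stirling's formula in the form $\Gamma(k)^n\gg_F C^{k}(k-1)^{n(k-1)}$ then shows that $|E|<1$ is implied by $|N(\mu)|^{k-1/2}<C^{k-1/2}(k-1)^{n(k-1)}N(\mathfrak{cn})^{k-1-\eta}$, which is precisely the stated condition once all the exponential-in-$k$ and $F$-dependent constants are collected into a single effective $C=C(F,\eta)$. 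The main obstacle is the control of the unit sum $\Sigma_c$: every other step is an insertion of the two bounds above followed by a routine convergent summation, whereas $\Sigma_c$ requires the genuine oscillatory decay of $J_{k-1}$ rather than the monomial bound, and it is there that the dependence of $C$ on $F$ is actually determined.
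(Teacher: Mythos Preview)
Your overall strategy matches the paper's, and you correctly isolate the unit sum $\Sigma_c$ as the only non-routine step. The gap is precisely there: you write ``I expect to bound $\Sigma_c\ll_F C^k\cdots$'' citing unspecified estimates from \cite{10}, but supply no mechanism, and your suggestion of invoking the oscillatory decay of $J_{k-1}$ for large argument is never carried out. Everything else in the proposal is, as you say, routine; this is the one place an actual idea is needed, and it is missing.

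The paper's device is simpler than large-argument asymptotics and is the key point you are lacking. From $|J_{k-1}(x)|\le 1$ together with the monomial bound one has, for every $0\le\eta<1$,
\[
|J_{k-1}(x)|\le\min\Bigl\{1,\Bigl(\tfrac{ex}{2(k-1)}\Bigr)^{k-1}\Bigr\}\le\Bigl(\tfrac{ex}{2(k-1)}\Bigr)^{k-1-\eta}.
\]
One applies this \emph{componentwise with different exponents}: take $\eta=0$ in the $i$-th factor when $\varepsilon_i\le 1$, and the fixed $\eta\in(0,1)$ when $\varepsilon_i>1$. The pure monomial contribution $\prod_i\varepsilon_i^{(k-1)/2}=N(\varepsilon)^{(k-1)/2}=1$ still cancels, but the asymmetric choice of exponents leaves a residual negative power of $\prod_{\varepsilon_i>1}\varepsilon_i$ (at the price of at most $|N(c)|^{\eta}$, absorbed via the balanced representatives). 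The unit sum then becomes $\sum_{\varepsilon\in\mathcal{O}^{\times+}}\prod_{\varepsilon_i>1}\varepsilon_i^{-\eta}$, which is finite by Dirichlet's unit theorem with a bound depending only on $F$ and $\eta$.

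Note in particular that in the paper the parameter $\eta$ enters through this Bessel interpolation, not through a divisor bound $2^{pr(\mathfrak{m})}\ll_\eta N(\mathfrak{m})^{\eta}$ as you propose; the paper simply uses $2^{pr(\mathfrak{m})}\le C_2\sqrt{N(\mathfrak{m})}$. This is also why the exponent of $N(\mathfrak{cn})$ in the hypothesis is $k-1-\eta$: your bookkeeping (putting $\eta$ in the Kloosterman estimate and none in the Bessel factor) would instead produce $k-\tfrac12-\eta$, which does not match the statement you are asked to prove.
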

\begin{proof} 
It is well-known that $|J_{\nu}(x)|\leq 1$ and $|J_{\nu}(x)|\leq\frac{\left(\frac{1}{2}x\right)^{\nu}}{\Gamma(\nu+1)}$ for $\nu \geq 0$ and  $x > 0$ (See (10.14.1) and (10.14.4) of \cite{O}). When $\nu=k-1$, by Stirling's lower bound, 
\[|J_{k-1}(x)|\leq \frac{1}{\sqrt{2(k-1)\pi}}\left(\frac{ex}{2(k-1)}\right)^{k-1}\leq \left(\frac{ex}{2(k-1)}\right)^{k-1}.\]
It follows that for any $0\leq \eta<1$, 
\[|J_{k-1}(x)|\leq \min\left\{1, \left(\frac{ex}{2(k-1)}\right)^{k-1}\right\}\leq \left(\frac{ex}{2(k-1)}\right)^{k-1-\eta}.\]
Now choosing $\eta=0$ for the factors of $NJ_{k-1}$ with $\varepsilon_i\leq 1$ and $\eta\in (0,1)$ for the factors with $\varepsilon_i>1$, from the choices of $c$ and $\mu$,  above bounds give that 
$$ NJ_{k-1} \left(\frac{4 \pi \sqrt{\varepsilon\mu^2}}{|c|}\right) \leq C_1(\frac{4\pi e}{2k-2})^{nk-n}N(\mu)^{k-1}
|N(c)|^{-k+1+\eta} \prod_{\left|\varepsilon_j\right|>1}\left|\varepsilon_j\right|^{-\eta}$$
where $C_1$ can be taken as $C_1=A^n$. Note that for any $c\in \mathfrak{cnd}$, $c(\mathfrak{cd})^{-1}$ is an integral ideal. Since $2^{pr(\mathfrak{m})} \leq C_2\sqrt{N(\mathfrak{m})}$ and $N_{\mu,\nu}(\mathfrak{m})\leq N(\mu)$ for any integral ideal $\mathfrak{m}$, we have
\begin{align*}
\left| S_{c(\mathfrak{cd})^{-1}}(\mu, \varepsilon \mu  ; c) \right| 
&\leq  2^{n+\frac{f}{2}} 2^{pr(c(\mathfrak{cd})^{-1})} \sqrt{\left|D\right|} \sqrt{N_{\mu, \varepsilon \mu }(c(\mathfrak{cd})^{-1})}\sqrt{N(c(\mathfrak{cd})^{-1})}\\
&\leq  C_3\frac{\sqrt{N(\mu)}|N(c)|}{N(\mathfrak{cd})}
\end{align*}
by Proposition \ref{Kloosterman sum ideal}. Now we have
\begin{align*}
&|c(\mu;\mu,\mathfrak{c},\mathfrak{n})-1|\\
=&\left|\frac{(2\pi)^n(-1)^{nk/2}N(\mathfrak{cd})}{D^{1 / 2}}\sum_{\varepsilon\in \mathcal{O}^{\times+}}\sum_{c\in \left(\mathfrak{cnd}/\mathcal{O}^\times\right)^*}  S_{c(\mathfrak{cd})^{-1}}(\mu, \varepsilon\mu  ; c) |N(c)|^{-1} J_{k-1} \left(\frac{4 \pi \sqrt{\varepsilon\mu^2}}{|c|}\right)\right|\\
\leq & C_4\sum_{\varepsilon\in \mathcal{O}^{\times+}}\sum_{c\in \left(\mathfrak{cnd}/\mathcal{O}^\times\right)^*} 
\sqrt{N(\mu)}
J_{k-1} \left(\frac{4 \pi \sqrt{\varepsilon\mu^2}}{|c|}\right)\\
\leq & C_5(\frac{2\pi e}{k-1})^{nk -n} N(\mu)^{k-\frac{1}{2}}
\sum_{\varepsilon\in \mathcal{O}^{\times+}}\prod_{\left|\varepsilon_j\right|>1}\left|\varepsilon_j\right|^{-\eta}
\sum_{c\in(\mathfrak{cnd}-\{0\})/\mathcal{O}^{\times}} |N(c)|^{1-k+\eta}\\
\leq & C_6(\frac{2\pi e}{k-1})^{nk -n} N(\mu)^{k-\frac{1}{2}}
\sum_{\varepsilon\in \mathcal{O}^{\times+}}\prod_{\left|\varepsilon_j\right|>1}\left|\varepsilon_j\right|^{-\eta}
\frac{\zeta_F(k-1-\eta)}{N(\mathfrak{cnd})^{k-1-\eta}}\\
\leq & C_7(\frac{2\pi e}{k-1})^{nk -n} N(\mu)^{k-\frac{1}{2}}
\sum_{\varepsilon\in \mathcal{O}^{\times+}}\prod_{\left|\varepsilon_j\right|>1}\left|\varepsilon_j\right|^{-\eta}
N(\mathfrak{cnd})^{-k+1+\eta}.
\end{align*}
We emphasize that all of the constants involved above are effective and depend on $F$ only. By Dirichlet's unit theorem, 
\[\sum_{\varepsilon\in \mathcal{O}^{\times+}}\prod_{\left|\varepsilon_j\right|>1}\left|\varepsilon_j\right|^{-\eta}\leq C_8,\]
where $C_8$ is a positive constant that depends on $F$ and $\eta$ only (see \cite{L}). Therefore, 
\[|c(\mu;\mu,\mathfrak{c},\mathfrak{n})-1|\leq C_9(\frac{2\pi e}{k-1})^{nk -n} N(\mu)^{k-\frac{1}{2}}N(\mathfrak{cnd})^{-k+1+\eta},\]
with $C_9$ effective and depending on $F$ and $\eta$ only.

It follows that there exists an effective constant $C$ depending on $F$ and $\eta$ only, such that if 
$$|N(\mu)|< C\left((k-1)^{n-nk} N(\mathfrak{cn})^{-k+1+\eta} \right)^{-1/(k-\frac{1}{2})},$$
then $|c_k(\mu,\mu)-1|<1$. Hence $c_k(\mu,\mu)\neq 0$  and $P_{\mu}(z ; k, \mathfrak{c},\mathfrak{n})$ is not identically zero. 
\end{proof}

By choosing an integral representative in the narrow ideal class of a fractional ideal, we obtain the following non-vanishing condition when $\mathfrak{c}$ is a fractional ideal.

\begin{Cor}\label{Non-Vanishing}
Let $\mathfrak{c}$ be a fractional ideal and $\alpha\in F^+$ with $\alpha\mathfrak{c}$ being integral. If $\mu\in\mathfrak{c}^+$ satisfies
$$|N(\mu)| < C(k-1)^{\frac{2nk-2n}{2k-1}} N(\mathfrak{cn})^{\frac{2k-3}{2k-1}} N(\alpha)^{-\frac{2}{2k-1}},$$ 
then $P_{\mu}(z ; k, \mathfrak{c},\mathfrak{n})$ is not identically zero. Here $C$ is an effective positive constant depending on $F$ only.
\end{Cor}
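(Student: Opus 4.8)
The plan is to reduce Corollary \ref{Non-Vanishing} to Theorem \ref{Thm-Bound} by clearing denominators. Since $\mathfrak{c}$ is now merely fractional, I cannot apply the theorem directly; instead I will replace $\mathfrak{c}$ by the integral ideal $\alpha\mathfrak{c}$, where $\alpha\in F^+$ is chosen (by the assumption) so that $\alpha\mathfrak{c}$ is integral. The key observation is that scaling by a totally positive $\alpha$ gives a natural dictionary between the two Poincar\'e series: multiplication by $\alpha$ carries $\mathfrak{c}^+$ onto $(\alpha\mathfrak{c})^+$ and, at the level of the group $\Gamma_0(\mathfrak{c},\mathfrak{n})$, conjugation by $\mathrm{diag}(\alpha,1)$ (or the appropriate scaling matrix) intertwines the actions, so that $P_\mu(z;k,\mathfrak{c},\mathfrak{n})$ vanishes identically if and only if $P_{\alpha\mu}(z;k,\alpha\mathfrak{c},\mathfrak{n})$ does. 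I would spell out this equivalence first, either by a direct change of variables in the Poincar\'e series or, more cleanly, by noting that both vanish iff their diagonal Fourier coefficient vanishes and tracking how $c_k(\mu,\mu)$ transforms under the scaling.

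Once this equivalence is in place, I would apply Theorem \ref{Thm-Bound} to the integral ideal $\alpha\mathfrak{c}$ and the element $\alpha\mu\in(\alpha\mathfrak{c})^+$. The theorem guarantees non-vanishing of $P_{\alpha\mu}(z;k,\alpha\mathfrak{c},\mathfrak{n})$ provided
$$|N(\alpha\mu)|< C\left((k-1)^{n-nk}N(\alpha\mathfrak{c}\,\mathfrak{n})^{-k+1+\eta}\right)^{-1/(k-\frac12)}.$$
Since $N(\alpha\mu)=N(\alpha)N(\mu)$ and $N(\alpha\mathfrak{c}\mathfrak{n})=N(\alpha)N(\mathfrak{cn})$, I would substitute and solve the resulting inequality for $|N(\mu)|$, isolating the dependence on $N(\alpha)$, $N(\mathfrak{cn})$ and $(k-1)$. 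This is the routine algebra step: dividing out $N(\alpha)$ and collecting the exponents of $N(\alpha)$ that arise from the two occurrences (one linear from $N(\alpha\mu)$, one from inside the bracket raised to $-1/(k-\frac12)$) should reproduce the stated exponent $-2/(2k-1)$ on $N(\alpha)$, while the exponents on $(k-1)$ and $N(\mathfrak{cn})$ match after converting $\eta$-dependence into the clean rational exponents by taking, say, $\eta$ fixed or letting it tend to a boundary value absorbed into $C$.

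To land on the precise exponents $\frac{2nk-2n}{2k-1}$ and $\frac{2k-3}{2k-1}$ in the statement, I would set $\eta\to 0$ (absorbing the loss into the effective constant $C$, which then depends on $F$ only rather than on $\eta$), so that the bracket exponent $-k+1+\eta$ becomes $-k+1=-(k-1)$ and the outer power $-1/(k-\frac12)=-2/(2k-1)$ combines to give $N(\mathfrak{cn})^{(k-1)\cdot 2/(2k-1)}=N(\mathfrak{cn})^{(2k-2)/(2k-1)}$; the missing unit in the exponent $\frac{2k-3}{2k-1}$ versus $\frac{2k-2}{2k-1}$ is exactly the shift produced when one divides the whole inequality by $N(\alpha)$ and re-expresses the bound in terms of $N(\mathfrak{cn})$ rather than $N(\alpha\mathfrak{cn})$. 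I expect the main obstacle to be bookkeeping rather than mathematical: carefully tracking the power of $N(\alpha)$ through both its linear appearance and its appearance inside the $-1/(k-\frac12)$ power, and verifying that the two contributions assemble into the single clean factor $N(\alpha)^{-2/(2k-1)}$ and the corrected $N(\mathfrak{cn})$-exponent, while confirming that the $\eta\to0$ limit is legitimate (i.e.\ that the constant stays finite and effective, which follows since the only $\eta$-dependence in Theorem \ref{Thm-Bound} sits in $C_8$ and the convergence of $\zeta_F(k-1-\eta)$, both harmless for $k>2$ and small $\eta$).
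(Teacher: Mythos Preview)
Your overall strategy is correct and matches the paper: scale by $\alpha$ to pass from the fractional ideal $\mathfrak{c}$ to the integral ideal $\alpha\mathfrak{c}$, observe that $P_{\alpha\mu}(z;k,\alpha\mathfrak{c},\mathfrak{n})=P_\mu(\alpha z;k,\mathfrak{c},\mathfrak{n})$ (equivalently $c_k(\alpha\mu,\alpha\mu;\alpha\mathfrak{c},\mathfrak{n})=c_k(\mu,\mu;\mathfrak{c},\mathfrak{n})$), apply Theorem \ref{Thm-Bound}, and solve the inequality for $|N(\mu)|$.

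The gap is in the endgame. Your proposal to take $\eta\to 0$ fails on two counts. First, the constant $C_8=\sum_{\varepsilon\in\mathcal{O}^{\times+}}\prod_{|\varepsilon_j|>1}|\varepsilon_j|^{-\eta}$ is \emph{not} harmless as $\eta\to 0$: each summand tends to $1$ and there are infinitely many totally positive units, so $C_8\to\infty$. You therefore cannot absorb the limit into an effective constant depending on $F$ only. Second, even if you could set $\eta=0$, the algebra does not produce the stated exponents. With $\eta=0$ you obtain
\[
|N(\mu)|<C\,(k-1)^{\frac{2nk-2n}{2k-1}}\,N(\mathfrak{cn})^{\frac{2k-2}{2k-1}}\,N(\alpha)^{-\frac{1}{2k-1}},
\]
and your claim that dividing by $N(\alpha)$ shifts the $N(\mathfrak{cn})$-exponent from $\tfrac{2k-2}{2k-1}$ to $\tfrac{2k-3}{2k-1}$ is simply wrong: that division only affects the power of $N(\alpha)$.

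The fix is to choose $\eta=\tfrac{1}{2}$ rather than $\eta\to 0$. With $\eta=\tfrac{1}{2}$ the $N(\mathfrak{cn})$-exponent becomes $\tfrac{2k-2-2\eta}{2k-1}=\tfrac{2k-3}{2k-1}$ and the $N(\alpha)$-exponent becomes $-\tfrac{1+2\eta}{2k-1}=-\tfrac{2}{2k-1}$, exactly as stated; and since $\eta$ is now fixed, the constant depends only on $F$. This is precisely what the paper does.
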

\begin{proof}
It is clear that
$$
P_{\alpha\mu}(z ; k, \alpha \mathfrak{c}, \mathfrak{n})=P_{\mu}(\alpha z ; k, \mathfrak{c}, \mathfrak{n})
$$
and $c_k(\alpha\nu;\alpha\mu,\alpha\mathfrak{c},\mathfrak{n})=c_k(\nu;\mu,\mathfrak{c},\mathfrak{n})$. By the preceding theorem, if
\[|N(\alpha\mu)| < C(k-1)^{\frac{2nk-2n}{2k-1}} N(\alpha\mathfrak{cn})^{\frac{2k-2-2\eta}{2k-1}},\]
or 
\[|N(\mu)| < C(k-1)^{\frac{2nk-2n}{2k-1}} N(\mathfrak{cn})^{\frac{2k-2-2\eta}{2k-1}}N(\alpha)^{-\frac{1+2\eta}{2k-1}},\]
$P_{\alpha\mu}(z ; k, \alpha \mathfrak{c}, \mathfrak{n})$ and hence $P_{\mu}(z ; k, \mathfrak{c}, \mathfrak{n})$ are not identically zero. In particular, set $\eta=\frac{1}{2}$ and we finish the proof.
\end{proof}

Finally, we state a similar result for Hilbert Poincar\'e series on the congruence subgroup
$$\Gamma_{0}(\mathfrak{n})=\left\{\left(\begin{array}{ll}a & b \\ c & d\end{array}\right) \in \text{SL}_2(\mathcal{O}_F), c \in \mathfrak{n}\right\},$$
and omit the almost identical details. The Hilbert Poincar\'e series is now defined for $\mu\in(\mathfrak{d}^{-1})^+$ by 
$$P_{\mu}'(z ; k, \mathfrak{n})=\sum_{\gamma \in N(\Gamma) \backslash \Gamma} N(c z+d)^{-k} e(\mu \gamma z),\quad \gamma=\left(\begin{array}{ll}a & b \\ c & d\end{array}\right),$$ where $N(\Gamma)=\left\{\left(\begin{array}{ll}1 & b \\ 0 & 1\end{array}\right): b \in \mathcal{O}\right\}$. 

\begin{Thm}\label{Thm-SL}
For $\mu\in (\mathfrak{d}^{-1})^+$, if 
$$|N(\mu)| \leq C(k-1)^{n-\frac{3n}{2k-1}}N(\mathfrak{n})^{\frac{k-3/4}{k-1/2}},$$
then $P_{\mu}'(z ; k, \mathfrak{n})$ is not identically zero. Here $C$ is an effective positive constant depending on $F$ only. 
\end{Thm}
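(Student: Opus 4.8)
The strategy is to follow the proof of Theorem~\ref{Thm-Bound} line by line, adapting each ingredient to the group $\Gamma_0(\mathfrak{n})\subset\mathrm{SL}_2(\mathcal{O})$. The first step is the reduction to a single Fourier coefficient. Exactly as in Section~2, the unfolding trick gives an identity $\langle f,P_\mu'(\cdot;k,\mathfrak{n})\rangle=a(\mu)\,c_F(k)$ for $f=\sum_\nu a(\nu)e(\nu z)\in S_k(\Gamma_0(\mathfrak{n}))$, with $c_F(k)$ an explicit nonzero factor; taking $f=P_\mu'$ itself shows that $P_\mu'$ vanishes identically if and only if its own $\mu$-th coefficient $c_k'(\mu,\mu)$ vanishes. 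Writing this coefficient as $c_k'(\mu,\mu)=1+E$, where the $1$ comes from the identity coset (the term $\chi_\mu(\mu)=1$) and $E$ collects the cosets with $c\neq0$, it suffices to prove $|E|<1$.

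The second step is to make $E$ explicit and bound it. Running the Bruhat decomposition and the Poisson summation as in Section~2, but now over the lattice $\mathcal{O}$ (so the dual lattice is $\mathfrak{d}^{-1}$ and the Fourier index lies in $(\mathfrak{d}^{-1})^+$, consistent with $\mu\in(\mathfrak{d}^{-1})^+$), one obtains for $E$ a sum over $c\in(\mathfrak{n}/\mathcal{O}^\times)^\ast$ and over totally positive units $\varepsilon\in\mathcal{O}^{\times+}$, with summand built from the Kloosterman sum $S(\mu,\varepsilon\mu;c)=S_{(c)}(\mu,\varepsilon\mu;c)$ of principal modulus $(c)$ and the Bessel product $NJ_{k-1}(4\pi\sqrt{\varepsilon\mu^2}/|c|)$. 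I would bound the Kloosterman sum by Proposition~\ref{Kloosterman sum ideal} with $\mathfrak{m}=(c)$: since $\mu\in\mathfrak{d}^{-1}$ gives $N_{\mu,\varepsilon\mu}((c))\leq N(\mu)N(\mathfrak{d})$ and $2^{pr((c))}\leq C\sqrt{|N(c)|}$, this yields $|S(\mu,\varepsilon\mu;c)|\leq C\sqrt{N(\mu)}\,|N(c)|$ with $C$ depending only on $F$, the factors $\sqrt{|D|}$, $2^{n+f/2}$ and $N(\mathfrak{d})=|D|$ being absorbed into $C$.

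The third step is the analytic estimate. For the Bessel product I would use the same two elementary bounds as in Theorem~\ref{Thm-Bound}, namely $|J_{k-1}(x)|\leq\min\{1,(ex/(2(k-1)))^{k-1}\}\leq(ex/(2(k-1)))^{k-1-\eta}$, applied componentwise with the per-index choice of $\eta$ dictated by whether $\varepsilon_i\leq1$ or $\varepsilon_i>1$; the decay in the unit variable comes, as there, from $N(\varepsilon)=1$ together with the factors $\prod_{|\varepsilon_j|>1}|\varepsilon_j|^{-\eta/2}$. Summing the resulting bound over the units by Dirichlet's unit theorem and over $c$ by $\sum_{c\in(\mathfrak{n}/\mathcal{O}^\times)^\ast}|N(c)|^{-s}\leq N(\mathfrak{n})^{-s}\zeta_F(s)$ produces an effective estimate of the shape $|E|\leq C(k-1)^{-a}N(\mu)^{k-1/2}N(\mathfrak{n})^{-b}$, and imposing $|E|<1$, taking $(k-\tfrac12)$-th roots, and choosing the free parameter optimally gives the bound in the statement; one checks along the way that the asymptotic shape $|N(\mu)|\lesssim(k-1)^nN(\mathfrak{n})$ is respected.

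The step I expect to be the main obstacle is obtaining the \emph{precise} exponents $n-\tfrac{3n}{2k-1}$ and $\tfrac{k-3/4}{k-1/2}$, rather than the exponents one reads off by simply inserting $\mathfrak{c}=\mathfrak{d}^{-1}$ into Corollary~\ref{Non-Vanishing} (which give a genuinely different trade-off). The new feature for $\Gamma_0(\mathfrak{n})$ is that the quotient is only by $N(\Gamma)$ and not by the full center, so the diagonal unit torus $\{\mathrm{diag}(\varepsilon,\varepsilon^{-1})\}$ enters the off-diagonal cosets; keeping careful track of its contribution, of the principal modulus $(c)$ in the Kloosterman sums, and of the covolume $\sqrt{|D|}$ of $\mathcal{O}$ in the Poisson step is what redistributes the powers of $(k-1)$ and $N(\mathfrak{n})$ and fixes the optimal value of $\eta$. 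Verifying that the absorbed factors $|D|$, $N(\mathfrak{d})$, $2^{n+f/2}$ and the identity $N(\varepsilon)=1$ do not secretly reintroduce $k$- or $\mathfrak{n}$-dependence is the only genuinely delicate point; everything else is formally identical to the proof of Theorem~\ref{Thm-Bound}, which is why the authors omit it.
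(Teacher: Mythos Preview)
Your proposal is essentially what the paper does: it omits the proof entirely, saying only that the argument is ``almost identical'' to that of Theorem~\ref{Thm-Bound}, so your plan to rerun that proof with the lattice $\mathcal{O}$, the modulus $c\in\mathfrak{n}$, and the principal-modulus Kloosterman sums $S(\mu,\varepsilon\mu;c)$ is exactly the intended route.

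One small correction to your bookkeeping: the structural difference between $\Gamma_0(\mathfrak n)\subset\mathrm{SL}_2(\mathcal O)$ and $\Gamma_0(\mathfrak c,\mathfrak n)$ is not that the diagonal torus ``enters the off-diagonal cosets''.  Rather, in the $\mathrm{SL}_2$ setting the coset space $N(\Gamma)\backslash\Gamma$ is parametrized by \emph{all} coprime bottom rows $(c,d)$ (not by associate classes), so after Poisson summation the $c$-sum runs over $\mathfrak n^\ast$ itself rather than over $(\mathfrak{cnd}/\mathcal O^\times)^\ast$ together with an external $\varepsilon\in\mathcal O^{\times+}$-sum.  The unit sum then reappears when you group $c$ by its ideal class and write $c=\eta c_0$ with $|c_{0,i}|\asymp|N(c)|^{1/n}$; it is this regrouping, together with the absence of the factor $N(\mathfrak{cd})$ and the slightly different placement of $\sqrt{|D|}$ from the Poisson step, that shifts the optimal $\eta$ and produces the exponents $n-\tfrac{3n}{2k-1}$ and $\tfrac{k-3/4}{k-1/2}$ rather than those of Corollary~\ref{Non-Vanishing}.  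Once you track this correctly, the rest of your outline (Proposition~\ref{Kloosterman sum ideal}, the two Bessel bounds with componentwise choice of exponent, Dirichlet's unit theorem, and the $\zeta_F$-bound) goes through verbatim.
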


\begin{Rmk}
(1) Our treatment actually works for $n=1$ as well, but our result is weaker than Rankin's \cite{R} in this case. Indeed, for any $\varepsilon>0$, $k^{-\varepsilon} \leq exp(-\frac{\text{log } k}{\text{log log }k})$ for sufficiently large $k$, so the upper bound in \cite{R} is roughly $k^2$, while our result gives roughly $k$.

(2) Such upper bound tends to infinity when either $N(\mathfrak{n})$ or $k$ tends to infinity. Therefore, Theorem \ref{Thm-SL} recovers Kumari's Theorem 4.1 and Theorem 4.3 in the case of parallel weight.
\end{Rmk}

\section{Selberg's Identity}

In this section, we assume that the different $\mathfrak{d}=(\delta)$ is principal. We shall prove Selberg's identity in this case and then apply it to the non-vanishing problem. For $\nu,\mu\in \mathfrak{d}^{-1}$ and $q\in \mathcal{O}$, in this section we focus on the Kloosterman sum
\begin{equation*}
S(\nu, \mu ; q)={\sum_{x \bmod q}}^{\prime} e\left(\frac{\nu x+\mu x^{-1}}{q}\right).
\end{equation*}
Here $\sum'_{x\bmod q}$ is to sum over invertible elements of $\mathcal{O}/(q)$.

\begin{Lem}
Let $p$ be any prime element in $\mathcal{O}$, $\varepsilon_1,\varepsilon_2\in\mathcal{O}^\times$ and integer $e \geq 1$. Then for any $r\in\mathcal{O}$ with $p|r$, we have
\begin{equation*}\label{vanishing Kloosterman}
S\left(\delta^{-1}\varepsilon_1, \delta^{-1}r ; \varepsilon_2 p^{e}\right)=\begin{cases}
-1,& \text{if}\quad e=1,\\
0, & \text{if}\quad e > 1.
\end{cases}\end{equation*}
\end{Lem}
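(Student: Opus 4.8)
The plan is to evaluate $S(\delta^{-1}\varepsilon_1,\delta^{-1}r;\varepsilon_2 p^e)$ directly by elementary character-sum manipulations in $\mathcal{O}/(p^e)$, using throughout that $e(y)=1$ whenever $y\in\mathfrak{d}^{-1}=\delta^{-1}\mathcal{O}$, and that $(p)$ is prime so $\mathcal{O}/(p)$ is a field. Write $\nu=\delta^{-1}\varepsilon_1$, $\mu=\delta^{-1}r$, $q=\varepsilon_2 p^e$, and (since $p\mid r$) $r=ps$ with $s\in\mathcal{O}$; note $(q)=(p^e)$, so the sum runs over $x\in(\mathcal{O}/(p^e))^\times$ and $x^{-1}$ is the inverse modulo $p^e$.

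For $e=1$ I first dispose of the reciprocal term: $\mu x^{-1}/q=\delta^{-1}s\varepsilon_2^{-1}x^{-1}\in\mathfrak{d}^{-1}$, so $e(\mu x^{-1}/q)=1$ and $S$ collapses to the Ramanujan-type sum $\sum_{x\in(\mathcal{O}/(p))^\times}e(\delta^{-1}\varepsilon_1\varepsilon_2^{-1}x/p)$. Extending the summation to all of $\mathcal{O}/(p)$, the additive character $x\mapsto e(\delta^{-1}\varepsilon_1\varepsilon_2^{-1}x/p)$ is nontrivial (because $\varepsilon_1\varepsilon_2^{-1}$ is a unit, so $p\nmid\varepsilon_1\varepsilon_2^{-1}$), hence the complete sum vanishes by orthogonality; subtracting the single non-invertible class $x=0$, which contributes $1$, gives $S=-1$.

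For $e>1$ I would use the multiplicative perturbation $x\mapsto x(1+p^{e-1}u)$, which for each $u$ (depending only on $u\bmod p$) is a bijection of $(\mathcal{O}/(p^e))^\times$ with inverse class $x^{-1}(1-p^{e-1}u)$, valid since $2(e-1)\ge e$. Tracking the two terms: the additive term acquires the factor $e(\nu x p^{e-1}u/q)=e(\delta^{-1}\varepsilon_1\varepsilon_2^{-1}xu/p)$, while the reciprocal term acquires $e(-\mu x^{-1}p^{e-1}u/q)=e(-\delta^{-1}s\varepsilon_2^{-1}x^{-1}u)=1$, the last equality being exactly where $p\mid r$ is used. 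Invariance of $S$ under the bijection yields, for every residue $u$, the identity $S=\sum_x e\bigl((\nu x+\mu x^{-1})/q\bigr)\,e(\delta^{-1}\varepsilon_1\varepsilon_2^{-1}xu/p)$. Averaging over the $|\mathcal{O}/(p)|$ residues $u$ and interchanging the order of summation, the inner sum $\sum_{u\bmod p}e(\delta^{-1}(\varepsilon_1\varepsilon_2^{-1}x)u/p)$ equals $|\mathcal{O}/(p)|$ or $0$ according as $p\mid x$ or not; since $x$ is a unit modulo $p^e$ we have $p\nmid x$, so this inner sum is identically $0$ and therefore $S=0$.

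The main obstacle is purely bookkeeping: the identity lives or dies by the triviality of the reciprocal perturbation, and this triviality is precisely the hypothesis $p\mid r$. I will therefore check with care that every quantity I declare to lie in $\mathfrak{d}^{-1}=\delta^{-1}\mathcal{O}$ genuinely does, keeping track of the units $\varepsilon_1,\varepsilon_2$ and of the cancellation of $p^{e-1}$ against $p^e$, and that $e(\,\cdot\,)$ is well defined on residues modulo $p^e$ (so that the choice of representative for the inverse class is immaterial); granting this, both cases reduce to one-line orthogonality statements.
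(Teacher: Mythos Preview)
Your argument is correct. The $e=1$ case is the same Ramanujan-sum evaluation the paper uses, and for $e>1$ your multiplicative-shift averaging (the familiar $x\mapsto x(1+p^{e-1}u)$ trick) goes through cleanly; the key point, as you note, is that the hypothesis $p\mid r$ kills the perturbation in the reciprocal term.

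The paper takes a different and somewhat slicker route that treats all $e\ge 1$ uniformly. After reducing to $\varepsilon_1=\varepsilon_2=1$, it observes that the map $x\mapsto x+rx^{-1}$ is a \emph{bijection} of $(\mathcal{O}/p^e)^\times$ onto itself whenever $p\mid r$ (injectivity comes from $(xy-r)(x-y)\equiv 0$ forcing $x\equiv y$, since $xy-r$ is a unit). Substituting $y=x+rx^{-1}$ collapses the Kloosterman sum in one stroke to the Ramanujan-type sum $\sum_{y\bmod p^e}' e(y/\delta p^e)$, which is then evaluated by inclusion--exclusion to give $-1$ for $e=1$ and $0$ for $e>1$. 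So the paper buys a single unified computation at the cost of an extra bijectivity lemma, whereas your approach avoids that lemma but splits the argument into two cases, the second of which uses the heavier (though entirely standard) averaging machinery. Both are short; the paper's version is perhaps more self-contained, while yours will feel more natural to anyone used to estimating Kloosterman sums modulo prime powers.
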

\begin{proof}
By a change of variable,
$$S(\delta^{-1}\varepsilon_1, \delta^{-1}r ; \varepsilon_2 p^{e})={\sum_{x\bmod p^e}}'  e\left(\frac{\varepsilon_1 x+r x^{-1}}{\delta \varepsilon_2 p^{e}}\right)
={\sum_{x\bmod p^e}}^{\prime}  e\left(\frac{x+\varepsilon_2^{-2}\varepsilon_1 r x^{-1}}{\delta p^{e}}\right),$$
so we only have to prove the statement when $\varepsilon_1=\varepsilon_2=1$.

We note that $x \mapsto x+r x^{-1}$ maps the set $\left(\mathcal{O}/q\right)^\times$ to itself, since $p\mid r$. Now we show that such a map is bijective, for which we only have to show the injectivity. If
$$x+r x^{-1} \equiv y+r y^{-1} \quad\bmod p^e,$$
where $x$ and $y$ are coprime to $p^e$, then
\begin{align*}
x y(x-y)+r(y-x) &\equiv 0 \quad\bmod p^e,\\
(x y-r)(x-y) &\equiv 0 \quad\bmod p^e,
\end{align*}
showing that $x \equiv y\bmod p^e$ as claimed, since $x y-r$ is coprime to $p^e$. 

Such bijection simplifies the Kloosterman sum and gives
\begin{align*}
S(\delta^{-1}, \delta^{-1}r ; p^e) &=\sum_{x\bmod p^e}\nolimits^{\prime} e\left(\frac{x}{\delta p^e}\right)=\sum_{x \bmod p^e} e\left(\frac{x}{\delta p^e}\right)-\sum_{\substack{ x \bmod p^e \\ p \mid x}} e\left(\frac{x}{\delta p^e}\right)\\
&= \sum_{x \bmod p^e} e\left(\frac{x}{\delta p^e}\right)-\sum_{x \bmod p^{e-1} } e\left(\frac{x}{\delta p^{e-1}}\right).
\end{align*}
Note that $\sum_{x \bmod p^e} e\left(\frac{x}{\delta p^e}\right)=0$ if $e\geq 1$ and $=1$ if $e=0$. From this, the statement follows.
\end{proof}

Now we can prove the following version of Selberg's identity.

\begin{Thm}
Assume that $\mathfrak{d}=(\delta)$ is principal and $q\in\mathcal{O}$ is a product of prime elements. Then for any $\nu,\mu\in\mathcal{O}$, we have
\begin{equation*}\label{Selberg's identity}
S(\delta^{-1}\nu, \delta^{-1}\mu ; q)=\sum_{(d) \mid(\nu, \mu, q)} N((d)) S\left(\delta^{-1}, \delta^{-1}\frac{\nu\mu}{d^{2}} ; \frac{q}{d}\right),
\end{equation*}
where the summation is over all principal ideals $(d)$ that divide $(\nu,\mu,q)$. 
\end{Thm}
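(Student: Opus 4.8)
The plan is to prove that both sides of the identity are twisted-multiplicative in $q$, and then to verify the identity when $q=p^{e}$ is a prime power, where the preceding Lemma supplies the decisive input.

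First I would record the twisted multiplicativity of the Kloosterman sum. If $q=q_{1}q_{2}$ with $(q_{1})+(q_{2})=\mathcal{O}$, pick $\overline{q_{1}},\overline{q_{2}}\in\mathcal{O}$ with $\overline{q_{2}}q_{2}\equiv1\pmod{q_{1}}$ and $\overline{q_{1}}q_{1}\equiv1\pmod{q_{2}}$. The partial-fraction congruence $\tfrac{1}{q_{1}q_{2}}\equiv\tfrac{\overline{q_{2}}}{q_{1}}+\tfrac{\overline{q_{1}}}{q_{2}}\pmod{\mathcal{O}}$, the Chinese Remainder Theorem, and the substitutions $x_{1}\mapsto q_{2}x_{1}$, $x_{2}\mapsto q_{1}x_{2}$ then give
\[
S(\delta^{-1}\nu,\delta^{-1}\mu;q_{1}q_{2})=S(\delta^{-1}\nu,\delta^{-1}\mu\overline{q_{2}}^{\,2};q_{1})\,S(\delta^{-1}\nu,\delta^{-1}\mu\overline{q_{1}}^{\,2};q_{2}),
\]
the manipulations being legitimate because $e$ is trivial on $\mathfrak{d}^{-1}=\delta^{-1}\mathcal{O}$ (as $\operatorname{Tr}(\mathfrak{d}^{-1})\subseteq\mathbb{Z}$) and every residual term introduced by the partial fraction and the substitutions lies in $\mathfrak{d}^{-1}$. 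I would then check that the right-hand side $R(\nu,\mu;q):=\sum_{(d)\mid(\nu,\mu,q)}N((d))\,S(\delta^{-1},\delta^{-1}\nu\mu/d^{2};q/d)$ obeys the same rule. Because $q_{1},q_{2}$ are coprime, $(\nu,\mu,q_{1}q_{2})=(\nu,\mu,q_{1})(\nu,\mu,q_{2})$, so each principal divisor factors as $(d)=(d_{1})(d_{2})$; applying the multiplicativity of $S$ to the factor $S(\delta^{-1},\delta^{-1}\nu\mu/d^{2};q/d)$ and using that $\overline{q_{2}}d_{2}$ is an inverse of $q_{2}/d_{2}$ modulo $q_{1}/d_{1}$ (so that the twisted second argument $(\nu\mu/d^{2})\overline{q_{2}/d_{2}}^{\,2}$ collapses to $(\nu\mu/d_{1}^{2})\overline{q_{2}}^{\,2}$) matches the $d_{1}$-term with $R(\nu,\mu\overline{q_{2}}^{\,2};q_{1})$ and the $d_{2}$-term with $R(\nu,\mu\overline{q_{1}}^{\,2};q_{2})$. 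Thus $R$ is twisted-multiplicative too, and it suffices to treat $q=p^{e}$ for all $\nu,\mu\in\mathcal{O}$.

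For the prime-power case I would induct on $e$, the engine being a collapsing identity: if $p\mid\nu$, $p\mid\mu$ and $e\geq2$, then the summand of $S(\delta^{-1}\nu,\delta^{-1}\mu;p^{e})$ depends only on $x$ modulo $p^{e-1}$ and each invertible class modulo $p^{e-1}$ has exactly $N((p))$ invertible lifts modulo $p^{e}$, so
\[
S(\delta^{-1}\nu,\delta^{-1}\mu;p^{e})=N((p))\,S(\delta^{-1}(\nu/p),\delta^{-1}(\mu/p);p^{e-1}).
\]
The induction then splits into three cases. If $p\nmid\nu$ (or symmetrically $p\nmid\mu$), the substitution $x\mapsto\overline{\nu}x$ gives $S(\delta^{-1}\nu,\delta^{-1}\mu;p^{e})=S(\delta^{-1},\delta^{-1}\nu\mu;p^{e})$, which is precisely the only surviving ($d=1$) term on the right. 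If $p\mid\nu,\mu$ and $e=1$, the summand is trivial (its argument lies in $\mathfrak{d}^{-1}$), so the left side equals $N((p))-1$, while on the right the $d=1$ term equals $-1$ by the preceding Lemma and the $(d)=(p)$ term equals $N((p))\,S(\delta^{-1},\delta^{-1}\nu\mu/p^{2};1)=N((p))$, and these agree. If $p\mid\nu,\mu$ and $e\geq2$, I apply the collapsing identity, invoke the inductive hypothesis at modulus $p^{e-1}$ for $(\nu/p,\mu/p)$, and reindex; this reproduces exactly the terms of $R$ indexed by $(d)=(p^{i})$ with $i\geq1$, while the missing $d=1$ term $S(\delta^{-1},\delta^{-1}\nu\mu;p^{e})$ vanishes by the preceding Lemma (the case $e>1$), closing the induction.

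The main obstacle is the bookkeeping in the multiplicative reduction: one must verify that the quadratic twists $\overline{q_{i}}^{\,2}$ thread correctly through the divisor sum defining $R$, in particular the congruence $\overline{q_{2}}d_{2}\equiv\overline{q_{2}/d_{2}}\pmod{q_{1}/d_{1}}$ that makes the twisted second arguments match. Once the twisted multiplicativity of both sides is secured, the prime-power induction is routine, the two cases $e=1$ and $e>1$ of the preceding Lemma supplying exactly the value $-1$ and the vanishing needed to discard the stray $d=1$ term.
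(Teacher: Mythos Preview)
Your proposal is correct and follows essentially the same approach as the paper: reduce to prime powers via (twisted) multiplicativity, then induct on $e$, handling the coprime case by a change of variable, the case $e=1$ explicitly, and the case $e\geq 2$ via the collapsing identity together with the preceding Lemma. The only real difference is presentational---you spell out the multiplicativity reduction (which the paper simply cites to \cite{15}) and organize the induction slightly differently, but the substance is the same.
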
 

By a simple change of variable, we see that choosing different generators of the principal ideal $(d)$ does not affect the Kloosterman sums on the right-hand side. 

\begin{proof}
It is easy to see that the identity is multiplicative in $q$; namely, if the identity holds for $q_1$ and $q_2$ with $(q_1,q_2)=\mathcal{O}$, then it also holds for $q_1q_2$ (see \cite{15} for the proof on  classical Kloosterman sums). Therefore, we only have to prove the identity for $q=p^e$, with $p$ a prime element and $e\geq 0$. We proceed by induction on $e$. The identity is trivial for $e=0$, so we assume that $e\geq 1$ and it holds for $q=p^{e-1}$.

If $(\nu, p)=1$ or $(\mu, p)=1$, we get from definition by a simple change of variable that $S(\delta^{-1}\nu,\delta^{-1}\mu ; p^e)=S(\delta^{-1}, \delta^{-1}\nu\mu ; p^e)$. In this case, we only need to show $S(\delta^{-1}, \delta^{-1}\nu\mu ; p^e)=S(\delta^{-1}, \delta^{-1}\frac{\nu\mu}{\varepsilon^2} ; \frac{p^e}{\varepsilon})$ for any unit $\varepsilon$, which follows immediately from a change of variable. So from now on we assume that $p$ divides both $\nu$ and $\mu$. Then by induction, the identity simplifies to
\begin{equation}\label{simplified form}
S(\delta^{-1}\nu,\delta^{-1}\mu ; p^e)=S(\delta^{-1}, \delta^{-1}\nu\mu ; p^e)+N((p)) S\left(\frac{\delta^{-1}\nu}{p}, \frac{\delta^{-1}\mu}{p} ; p^{e-1}\right) .
\end{equation}
If $e=1$, the identity \eqref{simplified form} is valid, because
$$S(\delta^{-1}\nu,\delta^{-1}\mu ; p)=N((p))-1, \quad S(\delta^{-1}, \delta^{-1}\nu\mu ; p)=-1, \quad S\left(\frac{\delta^{-1}\nu}{p}, \frac{\delta^{-1}\mu}{p} ; 1\right)=1.$$
Now we assume that $e \geq 2$. Since $(x, p^e)=1$ if and only if $(x, p^{e-1})=1$, hence in \eqref{simplified form} the left hand side is equal to 
\begin{align*}
{\sum_{x \bmod p^e}}'e\left(\frac{\nu x+\mu x^{-1}}{\delta p^e}\right)&={\sum_{x \bmod p^{e-1}}}'\sum_{y\bmod p}e\left(\frac{\nu (x+yp^{e-1})+\mu (x+yp^{e-1})^{-1}}{\delta p^e}\right).
\end{align*}
Since $(x+yp^{e-1})^{-1}=x^{-2}(x-yp^{e-1})$ and $p\mid (\nu,\mu)$, this is equal to 
\begin{align*}
{\sum_{x \bmod p^{e-1}}}'\sum_{y\bmod p}e\left(\frac{\nu x+\mu x^{-1}}{\delta p^e}\right)&=N((p))S\left(\frac{\nu}{\delta p},\frac{\mu}{\delta p}; p^{e-1}\right),
\end{align*}
which is exactly what we want since $S(\delta^{-1}, \delta^{-1}\nu\mu ; p^e)=0$. This finishes the proof.
\end{proof}

For the rest of this section, we assume furthermore that the narrow class number $h^+$ of $F$ is 1, so that every nonzero element in $\mathcal{O}$ satisfies the assumption in Theorem \ref{Selberg's identity}. 

\begin{Cor} \label{relation Kloosterman} Assume $h^+=1$. For a nonzero element of $q\in\mathcal{O}$, positive integers $m,n$ and $\nu,\mu\in\mathfrak{d}^{-1}$, let $p$ be a prime element dividing $q$ but coprime to $\delta\nu$ and $\delta\mu$. Then
$$S(\nu p^m, \mu p^n ; q) = S(\nu,\mu p^{m+n}; q)+N((p)) S\left(\nu p^{m-1}, \mu p^{n-1}; \frac{q}{p}\right).$$
\end{Cor}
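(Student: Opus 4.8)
The plan is to derive this corollary formally from Selberg's identity (Theorem \ref{Selberg's identity}) by expanding each of the three Kloosterman sums in the asserted relation and matching the resulting terms. Since $h^+=1$, every nonzero element of $\mathcal{O}$ is a product of prime elements, so Theorem \ref{Selberg's identity} applies to each of the moduli $q$, $q$, and $q/p$. First I would write $A=\delta\nu$ and $B=\delta\mu$; these lie in $\mathcal{O}$ and are coprime to $p$ by hypothesis, and $S(\nu p^m,\mu p^n;q)=S(\delta^{-1}(Ap^m),\delta^{-1}(Bp^n);q)$, with the analogous identities for the other two sums. Thus each sum is in precisely the shape to which Theorem \ref{Selberg's identity} applies, and the whole proof reduces to a comparison of three divisor sums.

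The next step is to compute the three index sets of divisors. Setting $e=v_p(q)\geq 1$ and $g=(A,B,q)$, the coprimality of $p$ to $A$ and $B$ forces $g$ to be prime to $p$ and gives
\[(Ap^m,Bp^n,q)=(p)^{M}g,\quad (A,Bp^{m+n},q)=g,\quad (Ap^{m-1},Bp^{n-1},q/p)=(p)^{M-1}g,\]
where $M=\min\{m,n,e\}\geq 1$. Parametrizing each divisor uniquely as $(d)=(p)^j(d')$ with $(d')\mid g$ and using $N((p)^j(d'))=N((p))^jN((d'))$, the expansion of $S(\nu p^m,\mu p^n;q)$ becomes a double sum over $0\leq j\leq M$ and $(d')\mid g$ of the terms $N((p))^jN((d'))\,S(\delta^{-1},\delta^{-1}ABp^{m+n-2j}/(d')^2;q/(p^jd'))$; here one checks $m+n-2j\geq 0$ throughout, since $j\leq\min\{m,n\}$, so the arguments remain integral. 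The expansion of $S(\nu,\mu p^{m+n};q)$ is exactly the subsum with $j=0$, while the expansion of $N((p))\,S(\nu p^{m-1},\mu p^{n-1};q/p)$, after shifting the $p$-power index $j\mapsto j+1$, is exactly the subsum with $1\leq j\leq M$. Adding the latter two recovers the full range $0\leq j\leq M$, which is the claim.

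The only genuine work is the bookkeeping in the reindexing: one must verify that the shift $j\mapsto j+1$ carries the argument $ABp^{(m-1)+(n-1)-2j}/(d')^2$ and the modulus $(q/p)/(p^jd')$ coming from the third term precisely onto the $j\geq 1$ terms $ABp^{m+n-2j}/(d')^2$ and $q/(p^jd')$ of the first, and that the prefactor $N((p))$ in front of the third sum correctly upgrades $N((p))^j$ to $N((p))^{j+1}$. I expect the main conceptual point — and the only place the hypotheses are used — to be the three gcd computations above: it is exactly the coprimality $p\nmid A,B$, together with $p\mid q$ forcing $e\geq 1$, that pins the $p$-part of each gcd to $(p)^M$, $(p)^0$, and $(p)^{M-1}$, so that the three index sets align after the shift. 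Finally, I would note that replacing $p$ or any generator of $(d)$ by an associate alters the right-hand sums only through the harmless change of variables recorded after Theorem \ref{Selberg's identity}, so the relation is well defined at the level of elements.
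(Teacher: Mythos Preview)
Your proof is correct and follows essentially the same approach as the paper: expand via Selberg's identity and split the divisor sum according to the $p$-part of the divisor. The only cosmetic difference is that the paper applies Theorem \ref{Selberg's identity} once, to the left-hand side alone, splits the resulting sum into the pieces $p\nmid d$ and $p\mid d$, and then recognizes each piece (via Selberg's identity read backwards) as $S(\delta^{-1}\nu,\delta^{-1}\mu p^{m+n};q)$ and $N((p))\,S(\delta^{-1}\nu p^{m-1},\delta^{-1}\mu p^{n-1};q/p)$ respectively; you instead expand all three sums and match termwise after parametrizing $(d)=(p)^j(d')$. Your bookkeeping with $M=\min\{m,n,e\}$ and the index shift $j\mapsto j+1$ is exactly the substitution $d=pd'$ the paper uses implicitly, so the two arguments are the same in substance.
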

\begin{proof}
Introducing the generator $\delta$ of $\mathfrak{d}$, we prove instead
$$S(\delta^{-1}\nu p^m, \delta^{-1}\mu p^n ; q) = S(\delta^{-1}\nu,\delta^{-1}\mu p^{m+n}; q)+N((p)) S\left(\delta^{-1}\nu p^{m-1},\delta^{-1} \mu p^{n-1}; \frac{q}{p}\right)$$
for $\nu,\mu\in\mathcal{O}$ satisfying
$p\nmid \nu, p \nmid \mu$. By the preceding theorem, we have
\begin{align*}
S(\delta^{-1}\nu p^m, \delta^{-1}\mu p^n; q)&=\sum_{d \mid(\nu p^m, \mu p^n, q)} N((d)) S\left(\delta^{-1}, \delta^{-1}\frac{\nu\mu p^{m+n}}{d^{2}} ; \frac{q}{d}\right)\\
&= \left(\sum_{\substack{ d \mid(\nu p^m, \mu p^n, q)\\ p\nmid d}} +\sum_{\substack{ d \mid(\nu p^m, \mu p^n, q)\\ p\mid d}} \right) N((d)) S\left(\delta^{-1}, \delta^{-1}\frac{\nu\mu p^{m+n}}{d^{2}} ; \frac{q}{d}\right)\\
&= S(\delta^{-1}\nu,\delta^{-1}\mu p^{m+n}; q)+N((p)) S\left(\delta^{-1}\nu p^{m-1}, \delta^{-1}\mu p^{n-1}; \frac{q}{p}\right),
\end{align*}
which is what we want.
\end{proof}

Now we rewrite the series expression for the Fourier coefficient $c_k(\nu,\mu)$ and then introduce and work on a symmetric variant. For any $\varepsilon\in\mathcal{O}^{\times}$, 
\begin{align*}
S_{\mathfrak{m}}(\nu, \mu\varepsilon^2 ; c) &=\sum_{x\in (\mathcal{O}/\mathfrak{m})^{\times}} e\left(\frac{\nu x+\mu\varepsilon^2 x^{-1}}{c}\right)
=\sum_{x\in (\mathcal{O}/\mathfrak{m})^{\times}} e\left(\frac{\varepsilon^{-1}\nu x+\varepsilon\mu x^{-1}}{\varepsilon^{-1}c}\right)\\
&=\sum_{x\in (\mathcal{O}/\mathfrak{m})^{\times}} e\left(\frac{\nu x+\mu x^{-1}}{\varepsilon^{-1}c}\right)=S_{\mathfrak{m}}(\nu, \mu; c/\varepsilon),
\end{align*}
so the $\nu$-th coefficient $c_k(\nu,\mu)$  of $P_{\mu}(z;k, \mathfrak{c},\mathfrak{n})$ equals
\begin{align*}
&\chi_{\mu}(\nu)+ \left(\frac{N(\nu)}{N(\mu)}\right)^{\frac{k-1}{2}}\frac{(2\pi)^n(-1)^{nk/2}N(\mathfrak{cd})}{D^{1 / 2}}\\
&\qquad\times\sum_{\varepsilon\in \mathcal{O}^{\times +}}\sum_{c\in(\mathfrak{cnd}/\mathcal{O}^{\times})^{\ast}}\frac{ S_{c(\mathfrak{cd})^{-1}}(\nu, \varepsilon\mu  ; c)}{|N(c)|}NJ_{k-1} \left(\frac{4 \pi \sqrt{\nu\varepsilon\mu}}{|c|}\right)\\
=&\chi_{\mu}(\nu)+ \left(\frac{N(\nu)}{N(\mu)}\right)^{\frac{k-1}{2}}\frac{(2\pi)^n(-1)^{nk/2}N(\mathfrak{cd})}{D^{1 / 2}}\\
&\qquad\times \sum_{\varepsilon\in \mathcal{O}^{\times +}/(\mathcal{O}^{\times})^2}\sum_{\eta\in\mathcal{O}^{\times}}
\sum_{c\in(\mathfrak{cnd}/\mathcal{O}^{\times})^{\ast}}
\frac{ S_{c(\mathfrak{cd})^{-1}}(\nu, \varepsilon\eta^2\mu  ; c)}{|N(c)|}NJ_{k-1} \left(\frac{4 \pi \sqrt{\nu\varepsilon\eta^2\mu}}{|c|}\right)\\
=&\chi_{\mu}(\nu)+ \left(\frac{N(\nu)}{N(\mu)}\right)^{\frac{k-1}{2}}\frac{(2\pi)^n(-1)^{nk/2}N(\mathfrak{cd})}{D^{1 / 2}}\\
&\qquad \times\sum_{\varepsilon\in \mathcal{O}^{\times +}/(\mathcal{O}^{\times})^2}\sum_{\eta\in\mathcal{O}^{\times}}
\sum_{c\in(\mathfrak{cnd}/\mathcal{O}^{\times})^{\ast}}
\frac{ S_{c(\mathfrak{cd})^{-1}}(\nu, \varepsilon\mu  ; c\eta)}{|N(c)|}NJ_{k-1} \left(\frac{4 \pi \sqrt{\nu\varepsilon\mu}}{|c\eta|}\right)\\
=&\chi_{\mu}(\nu)+ \left(\frac{N(\nu)}{N(\mu)}\right)^{\frac{k-1}{2}}\frac{(2\pi)^n(-1)^{nk/2}N(\mathfrak{cd})}{D^{1 / 2}}\\
&\qquad\times\sum_{\varepsilon\in \mathcal{O}^{\times +}/(\mathcal{O}^{\times})^2}
\sum_{c\in(\mathfrak{cnd})^{\ast}}
\frac{ S_{c(\mathfrak{cd})^{-1}}(\nu, \varepsilon\mu  ; c)}{|N(c)|}NJ_{k-1} \left(\frac{4 \pi \sqrt{\nu\varepsilon\mu}}{|c|}\right).
\end{align*}

Let $q$ be a totally positive generator of $\mathfrak{c}$, and $\nu,\mu\in\mathfrak{c}^+, c\in\mathfrak{cnd}$, we have $\frac{\nu}{q\delta},\frac{\mu}{q\delta}\in \mathfrak{d}^{-1}$, $\frac{c}{q\delta}\in\mathcal{O}$, hence
$S_{c(\mathfrak{ad})^{-1}}(\nu,\mu;c)=S(\frac{\nu}{q\delta},\frac{\mu}{q\delta};\frac{c}{q\delta})$. Now the $\nu$-th coefficient $c_k(\nu,\mu)$  of $P_{\mu}(z;k, \mathfrak{c},\mathfrak{n})$ equals to
$$ \chi_{\mu}(\nu)+  \left(\frac{N(\nu)}{N(\mu)}\right)^{\frac{k-1}{2}} \frac{(2\pi)^n(-1)^{nk/2}}{D^{1 / 2}}
\sum_{\varepsilon\in \mathcal{O}^{\times +}/(\mathcal{O}^{\times})^2}\sum_{c\in\mathfrak{n}^{\ast}}  S\left(\frac{\nu}{q\delta},\frac{\varepsilon\mu}{q\delta};c\right)  |N(c)|^{-1} NJ_{k-1} \left(\frac{4 \pi \sqrt{\nu\varepsilon\mu}}{|q\delta c|}\right).$$

In the following, we introduce and work on $\widetilde{c_k}(\nu, \mu)=N(\mu)^{k-1}c_k(\nu, \mu)$. Explicitly,
\begin{align*}
N(\nu\mu)^{-\frac{k-1}{2}}\widetilde{c_k}(\nu, \mu)&=\chi_{\mu}(\nu)+  \frac{(2\pi)^n(-1)^{nk/2}}{D^{1 / 2}}\\
&\qquad\times
\sum_{\varepsilon\in \mathcal{O}^{\times +}/(\mathcal{O}^{\times})^2}\sum_{c\in\mathfrak{n}^{\ast}}  S\left(\frac{\nu}{q\delta},\frac{\varepsilon\mu}{q\delta};c\right)  |N(c)|^{-1} NJ_{k-1} \left(\frac{4 \pi \sqrt{\nu\varepsilon\mu}}{|q\delta c|}\right).
\end{align*}
By a change of variable in the Kloosterman sum, we see that $\widetilde{c_k}(\nu, \mu)$ is symmetric in $\mu$ and $\nu$.

\begin{Cor} \label{relation coefficient} Assume $h^+=1$. For $\nu,\mu\in\mathfrak{c}^+=(q)^+$, positive integers $m,n$, and a totally positive prime element $p$ coprime to $\nu\mu q^{-2}\mathfrak{n}$, we have
$$\widetilde{c_{k}}(\nu p^{m}, \mu p^{n} )=\widetilde{c_{k}}(\nu, \mu p^{m+n})+N((p))^{k-1} \widetilde{c_{k}}(\nu p^{m-1}, \mu p^{n-1} ).$$
\end{Cor}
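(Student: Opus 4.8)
The plan is to reduce the identity for $\widetilde{c_k}$ directly to the Kloosterman sum identity of Corollary \ref{relation Kloosterman}, exploiting the fact that $\widetilde{c_k}(\nu,\mu)=N(\mu)^{k-1}c_k(\nu,\mu)$ is built from the series
\[
\sum_{\varepsilon\in\mathcal{O}^{\times+}/(\mathcal{O}^\times)^2}\sum_{c\in\mathfrak{n}^\ast} S\!\left(\tfrac{\nu}{q\delta},\tfrac{\varepsilon\mu}{q\delta};c\right)|N(c)|^{-1} NJ_{k-1}\!\left(\tfrac{4\pi\sqrt{\nu\varepsilon\mu}}{|q\delta c|}\right),
\]
in which the $J$-Bessel factor depends on $\nu$ and $\mu$ only through the product $\nu\mu$. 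This is the crucial structural observation: upon substituting $\nu\mapsto\nu p^m$ and $\mu\mapsto\mu p^n$, the Bessel weight depends only on $\nu\mu p^{m+n}$, which is identical for all three terms of the claimed identity (noting $\nu\mu p^{m+n}=(\nu p^{m-1})(\mu p^{n-1})p^2$ and $|p|$-factors cancel against $N((p))$ appropriately). Thus the whole identity will follow termwise in $\varepsilon$ and $c$ from the corresponding relation among the three Kloosterman sums.

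First I would write out the left side $\widetilde{c_k}(\nu p^m,\mu p^n)$ using the series above, with arguments $\tfrac{\nu p^m}{q\delta}$ and $\tfrac{\varepsilon\mu p^n}{q\delta}$. Since $p$ is coprime to $\nu\mu q^{-2}\mathfrak{n}$, for each $c\in\mathfrak{n}^\ast$ the element $\tfrac{c}{q\delta}\in\mathcal{O}$ is coprime to $p$ or divisible by $p$; in either case the hypotheses of Corollary \ref{relation Kloosterman} are met, with the roles of $\nu,\mu$ there played by $\tfrac{\nu}{q\delta},\tfrac{\mu}{q\delta}$, so that
\[
S\!\left(\tfrac{\nu p^m}{q\delta},\tfrac{\varepsilon\mu p^n}{q\delta};\tfrac{c}{q\delta}\right)=S\!\left(\tfrac{\nu}{q\delta},\tfrac{\varepsilon\mu p^{m+n}}{q\delta};\tfrac{c}{q\delta}\right)+N((p))\,S\!\left(\tfrac{\nu p^{m-1}}{q\delta},\tfrac{\varepsilon\mu p^{n-1}}{q\delta};\tfrac{c}{pq\delta}\right).
\]
I would then substitute this into the series and split it into two pieces. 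The first piece reassembles precisely the series for $\widetilde{c_k}(\nu,\mu p^{m+n})$. For the second piece, the factor $N((p))$ combines with the $N(c)^{-1}$ and the Bessel argument after reindexing $c\mapsto pc$ (legitimate since the second term only involves $c$ divisible by $p$, matching $\mathfrak{n}^\ast$ when $p\mid\mathfrak{n}$), producing the series for $\widetilde{c_k}(\nu p^{m-1},\mu p^{n-1})$ up to the normalizing power $N((p))^{k-1}$ that comes from comparing the factors $N(\mu p^n)^{k-1}$ versus $N(\mu p^{n-1})^{k-1}$ in the definition $\widetilde{c_k}=N(\mu)^{k-1}c_k$ together with the $N(p)$-dependence of the Bessel weight and the $|N(c)|^{-1}$ factor.

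The main obstacle I anticipate is bookkeeping of the various powers of $N((p))$: one must track how $N((p))^{k-1}$ emerges from the interplay of three sources—the normalization $N(\mu)^{k-1}$, the $|N(c)|^{-1}$ after the shift $c\mapsto pc$, and the homogeneity of $NJ_{k-1}$ whose argument scales as $\sqrt{p^{m+n}}$—and verify that these combine to exactly $N((p))^{k-1}$ with no residual factor. A secondary subtlety is confirming that the sum over $c\in\mathfrak{n}^\ast$ matches correctly on both sides after the reindexing, which requires that $p\mid\mathfrak{n}$ (guaranteed since $p\mid q$ forces $p$ into the relevant ideals through the coprimality hypothesis, or the second term vanishes otherwise). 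Once the powers of $N((p))$ and the index shift are checked, the identity follows immediately from the termwise Kloosterman relation.
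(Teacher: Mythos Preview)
Your overall strategy---reduce the identity to a termwise Kloosterman relation via Corollary~\ref{relation Kloosterman}---is exactly the paper's approach, and your observation that the Bessel weight depends only on the product $\nu\mu p^{m+n}$ is the right structural insight. However, there are two genuine gaps in the execution.

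First, Corollary~\ref{relation Kloosterman} requires that the prime $p$ \emph{divide} the modulus of the Kloosterman sum. In the series the modulus is $c\in\mathfrak{n}^\ast$ (not $c/(q\delta)$, incidentally), and since $p$ is assumed coprime to $\mathfrak{n}$ there are many $c$ with $p\nmid c$. For such $c$ the identity you invoke is simply undefined: $c/p\notin\mathcal{O}$. What saves the argument is that when $p\nmid c$ the element $p$ is a unit modulo $c$, so a direct change of variable gives $S(\nu p^m/(q\delta),\varepsilon\mu p^n/(q\delta);c)=S(\nu/(q\delta),\varepsilon\mu p^{m+n}/(q\delta);c)$ with no second term. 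You must therefore split the sum over $c$ into $\mathfrak{n}^\ast\setminus p\mathfrak{n}^\ast$ and $p\mathfrak{n}^\ast$ and handle the two ranges separately---which is precisely what the paper does, only starting from the right-hand side rather than the left.

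Second, your discussion of the reindexing is backwards. You say the shift $c\mapsto pc$ ``requires that $p\mid\mathfrak{n}$''; in fact it requires the opposite, that $(p)+\mathfrak{n}=\mathcal{O}$, so that the map $c\mapsto pc$ is a bijection from $\mathfrak{n}^\ast$ onto $p\mathfrak{n}^\ast=\{c\in\mathfrak{n}^\ast:p\mid c\}$. This is exactly the coprimality hypothesis on $p$ in the statement, and the paper explicitly flags it. Finally, the bookkeeping of $N((p))^{k-1}$ is cleaner than you anticipate: once one works with the normalized $N(\nu\mu)^{-\frac{k-1}{2}}\widetilde{c_k}(\nu,\mu)$, the factor $N((p))^{k-1}$ is absorbed entirely by the prefactor $N(\nu\mu p^{m+n})^{(k-1)/2}$ versus $N(\nu\mu p^{m+n-2})^{(k-1)/2}$, and the reduced identity has \emph{no} residual power of $N((p))$ at all---the single $N((p))$ from Corollary~\ref{relation Kloosterman} is cancelled by the $|N(pc)|^{-1}$ after reindexing.
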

\begin{proof}
The $\chi$-terms of both sides clearly match and we only have to prove 
\[\widetilde{c_{k}}'(\nu p^{m}, \mu p^{n} )=\widetilde{c_{k}}'(\nu, \mu p^{m+n})+\widetilde{c_{k}}'(\nu p^{m-1}, \mu p^{n-1} )\]
for 
\[\widetilde{c_k}'(\nu, \mu):=\sum_{\varepsilon\in \mathcal{O}^{\times +}/(\mathcal{O}^{\times})^2}\sum_{c\in\mathfrak{n}^{\ast}}  S\left(\frac{\nu}{q\delta},\frac{\varepsilon\mu}{q\delta};c\right)  |N(c)|^{-1} NJ_{k-1} \left(\frac{4 \pi \sqrt{\nu\varepsilon\mu}}{|q\delta c|}\right).\]
We have
\begin{align*}
&\widetilde{c_{k}}'(\nu, \mu p^{m+n})+\widetilde{c_{k}}'(\nu p^{m-1}, \mu p^{n-1} )\\
=&\sum_{\varepsilon\in \mathcal{O}^{\times +}/(\mathcal{O}^{\times})^2}\sum_{c\in\mathfrak{n}^{\ast}-p\mathfrak{n}^{\ast}}  S\left(\frac{\nu}{q\delta},\frac{\varepsilon\mu p^{m+n}}{q\delta};c\right)  |N(c)|^{-1} NJ_{k-1} \left(\frac{4 \pi \sqrt{\nu\varepsilon\mu p^{m+n}}}{|q\delta c|}\right)\\
&+\sum_{\varepsilon\in \mathcal{O}^{\times +}/(\mathcal{O}^{\times})^2}\sum_{c\in p\mathfrak{n}^{\ast}}  S\left(\frac{\nu}{q\delta},\frac{\varepsilon\mu p^{m+n}}{q\delta};c\right)  |N(c)|^{-1} NJ_{k-1} \left(\frac{4 \pi \sqrt{\nu\varepsilon\mu p^{m+n}}}{|q\delta c|}\right)\\
&+\sum_{\varepsilon\in \mathcal{O}^{\times +}/(\mathcal{O}^{\times})^2}\sum_{c\in\mathfrak{n}^{\ast}}  S\left(\frac{\nu p^{m-1}}{q\delta},\frac{\varepsilon\mu p^{n-1}}{q\delta};c\right)  |N(c)|^{-1} NJ_{k-1} \left(\frac{4 \pi \sqrt{\nu\varepsilon\mu p^{m+n}}}{|q\delta c p|}\right)\\
=&\sum_{\varepsilon\in \mathcal{O}^{\times +}/(\mathcal{O}^{\times})^2}\sum_{c\in\mathfrak{n}^{\ast}-p\mathfrak{n}^{\ast}}  S\left(\frac{\nu}{q\delta},\frac{\varepsilon\mu p^{m+n}}{q\delta};c\right)  |N(c)|^{-1} NJ_{k-1} \left(\frac{4 \pi \sqrt{\nu\varepsilon\mu p^{m+n}}}{|q\delta c|}\right)\\
&+\sum_{\varepsilon\in \mathcal{O}^{\times +}/(\mathcal{O}^{\times})^2}\sum_{c\in \mathfrak{n}^{\ast}}  S\left(\frac{\nu}{q\delta},\frac{\varepsilon\mu p^{m+n}}{q\delta},pc\right)  |N(pc)|^{-1} NJ_{k-1} \left(\frac{4 \pi \sqrt{\nu\varepsilon\mu p^{m+n}}}{|q\delta p c|}\right) \\
&+\sum_{\varepsilon\in \mathcal{O}^{\times +}/(\mathcal{O}^{\times})^2}\sum_{c\in\mathfrak{n}^{\ast}}  S\left(\frac{\nu p^{m-1}}{q\delta},\frac{\varepsilon\mu p^{n-1}}{q\delta};c\right)  |N(c)|^{-1} NJ_{k-1} \left(\frac{4 \pi \sqrt{\nu\varepsilon\mu p^{m+n}}}{|q\delta c p|}\right),
\end{align*}
which, by Corollary \ref{relation Kloosterman}, is equal to
\begin{align*}
&\sum_{\varepsilon\in \mathcal{O}^{\times +}/(\mathcal{O}^{\times})^2}\sum_{c\in\mathfrak{n}^{\ast}-p\mathfrak{n}^{\ast}}  S\left(\frac{\nu}{q\delta},\frac{\varepsilon\mu p^{m+n}}{q\delta};c\right)  |N(c)|^{-1} NJ_{k-1} \left(\frac{4 \pi \sqrt{\nu\varepsilon\mu p^{m+n}}}{|q\delta c|}\right)\\
&\sum_{\varepsilon\in \mathcal{O}^{\times +}/(\mathcal{O}^{\times})^2}\sum_{c\in\mathfrak{n}^{\ast}}  S\left(\frac{\nu p^{m}}{q\delta},\frac{\varepsilon\mu p^{n}}{q\delta};c p\right)  |N(c p)|^{-1} NJ_{k-1} \left(\frac{4 \pi \sqrt{\nu\varepsilon\mu p^{m+n}}}{|q\delta c p|}\right)\\
=&\sum_{\varepsilon\in \mathcal{O}^{\times +}/(\mathcal{O}^{\times})^2}\sum_{c\in\mathfrak{n}^{\ast}-p\mathfrak{n}^{\ast}}   S\left(\frac{\nu p^{m}}{q\delta},\frac{\varepsilon\mu p^{n}}{q\delta};c\right)  |N(c)|^{-1} NJ_{k-1} \left(\frac{4 \pi \sqrt{\nu\varepsilon\mu p^{m+n}}}{|q\delta c|}\right)\\
&+\sum_{\varepsilon\in \mathcal{O}^{\times +}/(\mathcal{O}^{\times})^2}\sum_{c\in p\mathfrak{n}^{\ast}}  S\left(\frac{\nu p^{m}}{q\delta},\frac{\varepsilon\mu p^{n}}{q\delta};c\right)  |N(c)|^{-1} NJ_{k-1} \left(\frac{4 \pi \sqrt{\nu\varepsilon\mu p^{m+n}}}{|q\delta c|}\right).
\end{align*}
In the last equality $(p)+\mathfrak{n}=\mathcal{O}$ is needed. The last expression is nothing but $\widetilde{c_{k}}'(\nu p^{m}, \mu p^{n} )$ and we finish the proof.
\end{proof}

Now we can deduce the following vanishing and non-vanishing relations of Hilbert Poincar\'e series.

\begin{Thm}\label{relation Poincare}
Assume $h^+=1$, $q$ is a totally positive generator of the fractional ideal $\mathfrak{c}$ and $\mu\in \mathfrak{c}^{+}$. 

(1) Let $\alpha\in\mathcal{O}^+$ be coprime to $\mu q^{-1}\mathfrak{n}$. If $P_{\mu}(z;k,\mathfrak{c},\mathfrak{n}) \equiv 0$, then $P_{\mu\alpha}(z;k,\mathfrak{c},\mathfrak{n}) \equiv 0$.

(2) Let $p$ be a totally positive prime element coprime to $\mu q^{-1}\mathfrak{n}$ and $m$ be a positive integer. If $P_{\mu}(z;k,\mathfrak{c},\mathfrak{n}) \not \equiv 0$,  then either $P_{\mu p^m}(z;k,\mathfrak{c},\mathfrak{n}) \not \equiv 0$, or $P_{\mu p^{m-1}}(z;k,\mathfrak{c},\mathfrak{n}) \not \equiv 0$ and $P_{\mu p^{m+1}}(z;k,\mathfrak{c},\mathfrak{n}) \not \equiv 0$.
\end{Thm}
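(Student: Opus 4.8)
The engine of both statements is the following strengthening of the criterion recorded in Section~2. For any $\lambda\in\mathfrak{c}^+$ one has $P_\lambda\equiv 0$ if and only if its $\lambda$-th coefficient $c_k(\lambda,\lambda)$ vanishes; but $P_\lambda\equiv 0$ forces \emph{all} of its Fourier coefficients to vanish, and $\widetilde{c_k}(\nu,\lambda)=N(\lambda)^{k-1}c_k(\nu,\lambda)$ is symmetric in its two arguments. Hence, writing everything in terms of the symmetric array $\widetilde{c_k}$,
\begin{equation*}
P_\lambda\equiv 0\ \iff\ \widetilde{c_k}(\lambda,\lambda)=0\ \iff\ \widetilde{c_k}(\lambda,\nu)=0\ \text{for every}\ \nu\in\mathfrak{c}^+.\tag{$\dagger$}
\end{equation*}
In words, the vanishing of one diagonal entry forces the whole corresponding row to vanish; this is exactly what lets us feed the additive relation of Corollary~\ref{relation coefficient} back into the (non-)vanishing problem. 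Throughout we fix a totally positive prime $p$ coprime to $\mu q^{-1}\mathfrak{n}$ and write $F(a,b)=\widetilde{c_k}(\mu p^{a},\mu p^{b})$, so that Corollary~\ref{relation coefficient} reads $F(a,b)=F(0,a+b)+N((p))^{k-1}F(a-1,b-1)$ for $a,b\ge 1$; note its hypothesis (coprimality of $p$ to $\mu^2q^{-2}\mathfrak{n}$) is implied by coprimality of $p$ to $\mu q^{-1}\mathfrak{n}$.

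For part (1) I would first reduce to a prime power. Since $h^+=1$ and $\mathfrak{c}=(q)$, factor $\alpha$ into totally positive prime elements; as $P_{\varepsilon\mu}=P_\mu$ for totally positive units $\varepsilon$, we may take $\alpha=\prod_i p_i^{e_i}$, and it suffices to multiply by one prime at a time and induct on the number of prime factors, the base point at stage $t$ being $\mu_t=\mu\prod_{i\le t}p_i^{e_i}\in\mathfrak{c}^+$ (the needed coprimality survives because the $p_i$ are distinct and coprime to $\mu q^{-1}\mathfrak{n}$). For a single prime $p$, assume $P_\mu\equiv 0$; by $(\dagger)$ we have $F(0,s)=\widetilde{c_k}(\mu,\mu p^{s})=0$ for all $s\ge 0$, so the leading term in the relation always drops out and $F(j,j)=N((p))^{(k-1)j}F(0,0)=0$ for every $j$. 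Thus $P_{\mu p^{j}}\equiv 0$, which advances the induction and proves part (1).

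For part (2) I would argue by contraposition: assuming $P_{\mu p^m}\equiv 0$ together with one of $P_{\mu p^{m\mp1}}\equiv 0$, in either case two \emph{consecutive} diagonal entries vanish, so by $(\dagger)$ two consecutive rows of $F$ vanish identically, say rows $j_0$ and $j_0-1$ with $j_0\ge 2$ (the case $j_0=1$ already gives $P_\mu\equiv 0$). The plan is a downward induction. From the relation with first index $j_0$ one reads off $F(0,s)=0$ for $s\ge j_0+1$; substituting this into the relation for row $j_0-1$ gives $F(j_0-2,\ell)=0$ for all $\ell\ge 1$. When $j_0-2\ge 1$ this yields $\widetilde{c_k}(\mu p^{j_0-2},\mu p^{j_0-2})=0$, which $(\dagger)$ promotes to the vanishing of the entire row $j_0-2$; now rows $j_0-1,j_0-2$ vanish and the hypothesis descends. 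Iterating brings us to the pair of rows $1$ and $0$. I expect the genuinely delicate point to be this final step, where ``$F(0,\ell)=0$ for $\ell\ge 1$'' does not on its own deliver the diagonal entry $F(0,0)=\widetilde{c_k}(\mu,\mu)$; here one must invoke the relation once more, $0=F(1,1)=F(0,2)+N((p))^{k-1}F(0,0)$ with $F(0,2)=0$ and $N((p))^{k-1}\ne 0$, to conclude $F(0,0)=0$, i.e. $P_\mu\equiv 0$. This is precisely the contrapositive of part (2), so the proof is complete.
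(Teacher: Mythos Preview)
Your argument is correct. For part~(1) it coincides with the paper's: both reduce to a prime power and iterate Corollary~\ref{relation coefficient} along the diagonal, using $(\dagger)$ to kill the $F(0,2j)$ terms.

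For part~(2) the routes diverge. The paper telescopes the recurrence once and for all to the closed identity
\[
\widetilde{c_k}(\mu p^{m},\mu p^{m})=N((p))^{(k-1)m}\,\widetilde{c_k}(\mu,\mu)+\widetilde{c_k}(\mu p^{m-1},\mu p^{m+1}),
\]
from which the conclusion is immediate: if $\widetilde{c_k}(\mu,\mu)\neq 0$ then the left side and the off-diagonal term on the right cannot both vanish, and nonvanishing of $\widetilde{c_k}(\mu p^{m-1},\mu p^{m+1})$ forces both $P_{\mu p^{m\pm1}}\not\equiv 0$ by the symmetry of $\widetilde{c_k}$. Your contrapositive downward induction reaches the same end but is longer and carries some bookkeeping (your phrase ``pair of rows $1$ and $0$'' really means rows $1$ and $2$ together with the partial information $F(0,\ell)=0$ for $\ell\ge 1$, which you then complete via $F(1,1)$). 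The paper's single identity isolates exactly the relevant off-diagonal coefficient and avoids any descent; your approach, by contrast, makes the downward propagation of vanishing explicit and would adapt more readily if one wanted to analyse longer strings of consecutive vanishing Poincar\'e series.
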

\begin{proof}
We only need to prove the statements for the coefficients $\widetilde{c_{k}}(\mu, \mu)$ accordingly. Since $\widetilde{c_{k}}(\mu,\mu)=\widetilde{c_{k}}(\mu\varepsilon,\mu\varepsilon)$ for any unit $\varepsilon$, by induction, we may also assume that $\alpha=p^m$ is a power of a totally positive prime element $p$ for part (1). 

By Corollary \ref{relation coefficient} and induction on $m$, when $p$ is coprime to $\mu q^{-1}\mathfrak{n}$ and $m\geq 1$,
$$\widetilde{c_{k}}\left(\mu p^{m}, \mu p^{m}\right)=\sum_{\lambda=0}^{m} N((p))^{(k-1) \lambda} \widetilde{c_{k}}\left(\mu, \mu p^{2(m-\lambda)}\right),$$ and
$$\widetilde{c_{k}}(\mu p^{m-1}, \mu p^{m+1})=\sum_{\lambda=0}^{m-1} N((p))^{(k-1) \lambda} \widetilde{c_{k}}\left(\mu, \mu p^{2(m-\lambda)}\right).$$
It follows that
\begin{equation}\label{relation Fourier}
\widetilde{c_{k}}\left(\mu p^{m}, \mu p^{m}\right)=N((p))^{(k-1)m} \widetilde{c_{k}}(\mu, \mu)+\widetilde{c_{k}}\left(\mu p^{m-1}, \mu p^{m+1}\right).
\end{equation}

Now if $\widetilde{c_{k}}(\mu, \mu)=0$ and $\widetilde{c_{k}}\left(\mu p^{m-1}, \mu p^{m-1}\right)=0$ for any $m \geq 1$, then $P_{\mu p^{m-1}}(z;k,\mathfrak{c},\mathfrak{n})$ is identically zero and $\widetilde{c_{k}}\left(\mu p^{m-1}, \mu p^{m+1}\right)$ vanishes. It follows that $\widetilde{c_{k}}\left(\mu p^{m}, \mu p^{m}\right)=0$. By induction on $m$, it follows  that if $\widetilde{c_{k}}(\mu, \mu)=0$, then $\widetilde{c_{k}}\left(\mu p^{m}, \mu p^{m}\right)=0$, for all $m \geq 0$. This is part (1). If $\widetilde{c_{k}}(\mu, \mu) \neq 0$, it follows from (\ref{relation Fourier}) that it is not possible for both $\widetilde{c_{k}}\left(\mu p^{m}, \mu p^{m}\right)$ and $\widetilde{c_{k}}\left(\mu p^{m-1}, \mu p^{m+1}\right)$ to be zero. Note that if $\widetilde{c_{k}}\left(\mu p^{m-1}, \mu p^{m+1}\right)\neq 0$, then  $P_{\mu p^{m-1}}(z;k,\mathfrak{c},\mathfrak{n}) \not \equiv 0$ and $P_{\mu p^{m+1}}(z;k,\mathfrak{c},\mathfrak{n}) \not \equiv 0$. This gives part (2).
\end{proof}

Consequently, if $p$ coprime to $\mu q^{-1}\mathfrak{n}$ and $P_{\mu}(z;k,\mathfrak{c},\mathfrak{n}) \not \equiv 0$, then it is not possible for $P_{\mu p^{m}}(z;k,\mathfrak{c},\mathfrak{n})$ to be identically zero for two consecutive positive integers $m$. 

Very similarly, for $\Gamma_0(\mathfrak{n})$ inside $\textrm{SL}_2(\mathcal{O})$, we have the following result. We omit the analogous proof.

\begin{Thm}
Assume $h^+=1$ and $\delta$ is a totally positive generator of $\mathfrak{d}$. Let $\mu\in (\mathfrak{d}^{-1})^{+}$, $\alpha\in\mathcal{O}^{+}$ and $p$ be a totally positive prime element such that $\alpha p$ is coprime to $\mu\delta\mathfrak{n}$. Then

(1) If $P_{\mu}'(z;k,\mathfrak{n}) \equiv 0$, then $P_{\mu\alpha}'(z;k,\mathfrak{n}) \equiv 0$.

(2) If $P_{\mu}'(z;k,\mathfrak{n}) \not \equiv 0$, then for any positive integer $m$, either $P_{\mu p^m}'(z;k,\mathfrak{n}) \not \equiv 0$, or $P_{\mu p^{m-1}}'(z;k,\mathfrak{n}) \not \equiv 0$ and $P_{\mu p^{m+1}}'(z;k,\mathfrak{n}) \not \equiv 0$.
\end{Thm}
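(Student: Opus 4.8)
The plan is to run the proof of Theorem \ref{relation Poincare} line by line in the $\mathrm{SL}_2(\mathcal{O})$ setting. The first task is to put the Fourier coefficients of $P_\mu'(z;k,\mathfrak{n})$ into Kloosterman form. Observe that $\Gamma_0(\mathfrak{n})$ is, up to the determinant normalization, the case $\mathfrak{c}=\mathfrak{d}^{-1}$ of $\Gamma_0(\mathfrak{c},\mathfrak{n})$: indeed $b\in(\mathfrak{cd})^{-1}=\mathcal{O}$ and $c\in\mathfrak{cnd}=\mathfrak{n}$, while $\mu\in\mathfrak{c}^+=(\mathfrak{d}^{-1})^+$. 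A totally positive generator of $\mathfrak{c}=\mathfrak{d}^{-1}$ is $q=\delta^{-1}$, so that $q\delta=1$. Carrying out the same unfolding and Poisson summation as in Section 2 and then folding the unit sum exactly as in the derivation preceding Corollary \ref{relation coefficient} (using $S_\mathfrak{m}(\nu,\mu\varepsilon^2;c)=S_\mathfrak{m}(\nu,\mu;c/\varepsilon)$), I would obtain, for $\nu,\mu\in(\mathfrak{d}^{-1})^+$, that the symmetric variant $\widetilde{c_k}'(\nu,\mu)=N(\mu)^{k-1}c_k'(\nu,\mu)$ of the $\nu$-th Fourier coefficient $c_k'(\nu,\mu)$ of $P_\mu'$ satisfies
\[
N(\nu\mu)^{-\frac{k-1}{2}}\widetilde{c_k}'(\nu,\mu)=\chi_\mu(\nu)+\frac{(2\pi)^n(-1)^{nk/2}}{D^{1/2}}\sum_{\varepsilon\in\mathcal{O}^{\times+}/(\mathcal{O}^\times)^2}\sum_{c\in\mathfrak{n}^{\ast}}S(\nu,\varepsilon\mu;c)\,|N(c)|^{-1}NJ_{k-1}\!\left(\frac{4\pi\sqrt{\nu\varepsilon\mu}}{|c|}\right),
\]
the point being that since $q\delta=1$ the Kloosterman arguments are simply $\nu,\varepsilon\mu\in\mathfrak{d}^{-1}$, exactly the objects governed by Corollary \ref{relation Kloosterman}. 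A change of variable in the Kloosterman sum shows $\widetilde{c_k}'$ is symmetric in $\nu$ and $\mu$.

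With this formula in hand, the next step is the analogue of Corollary \ref{relation coefficient}: for $\nu,\mu\in(\mathfrak{d}^{-1})^+$, positive integers $m,n$, and a totally positive prime element $p$ coprime to $\delta\nu$, $\delta\mu$ and $\mathfrak{n}$,
\[
\widetilde{c_k}'(\nu p^m,\mu p^n)=\widetilde{c_k}'(\nu,\mu p^{m+n})+N((p))^{k-1}\widetilde{c_k}'(\nu p^{m-1},\mu p^{n-1}).
\]
The proof copies that of Corollary \ref{relation coefficient}: the $\chi$-terms match, so it suffices to treat the $J$-Bessel-weighted Kloosterman series; one splits the sum over $c\in\mathfrak{n}^{\ast}$ into the parts with $p\nmid c$ and $c\in p\mathfrak{n}^{\ast}$, applies Corollary \ref{relation Kloosterman} to $S(\nu,\varepsilon\mu p^{m+n};c)$ (whose hypotheses hold because $p$ is coprime to $\delta\nu$ and $\delta\mu$), and recombines using $(p)+\mathfrak{n}=\mathcal{O}$.

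Specializing to $\nu=\mu$ and iterating, exactly as in the proof of Theorem \ref{relation Poincare}, yields for $p$ coprime to $\mu\delta\mathfrak{n}$ the telescoped identity
\[
\widetilde{c_k}'(\mu p^m,\mu p^m)=N((p))^{(k-1)m}\widetilde{c_k}'(\mu,\mu)+\widetilde{c_k}'(\mu p^{m-1},\mu p^{m+1}).
\]
Part (1) then follows by induction on $m$, after reducing a general totally positive $\alpha$ coprime to $\mu\delta\mathfrak{n}$ to prime powers by multiplicativity and using $\widetilde{c_k}'(\mu\varepsilon,\mu\varepsilon)=\widetilde{c_k}'(\mu,\mu)$ to absorb units: if $\widetilde{c_k}'(\mu,\mu)=0$ then every $\widetilde{c_k}'(\mu\alpha,\mu\alpha)$ vanishes, i.e. $P_{\mu\alpha}'\equiv0$. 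Part (2) follows because, when $\widetilde{c_k}'(\mu,\mu)\neq0$, the displayed identity prevents $\widetilde{c_k}'(\mu p^m,\mu p^m)$ and $\widetilde{c_k}'(\mu p^{m-1},\mu p^{m+1})$ from both vanishing, and $\widetilde{c_k}'(\mu p^{m-1},\mu p^{m+1})\neq0$ forces both $P_{\mu p^{m-1}}'\not\equiv0$ and $P_{\mu p^{m+1}}'\not\equiv0$.

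I expect the only real obstacle to be the first step, namely verifying that the $\mathrm{SL}_2(\mathcal{O})$-unfolding of $P_\mu'$ (which mods out by $N(\Gamma)$ rather than $Z(\Gamma)N(\Gamma)$ and has determinant $1$ rather than in $\mathcal{O}^{\times+}$) really does reproduce the displayed coefficient formula, with the generator $q=\delta^{-1}$ of $\mathfrak{c}=\mathfrak{d}^{-1}$ collapsing the Kloosterman arguments to $\nu,\varepsilon\mu\in\mathfrak{d}^{-1}$ and the sum running over $c\in\mathfrak{n}^{\ast}$. Once this structural identity is confirmed, every subsequent step is formally identical to Section 4, and the hypothesis that $\alpha p$ be coprime to $\mu\delta\mathfrak{n}$ supplies precisely the coprimality ($p\nmid\delta\mu$, $p\nmid\mathfrak{n}$) demanded by Corollary \ref{relation Kloosterman}.
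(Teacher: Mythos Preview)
Your proposal is correct and follows exactly the approach the paper intends: the paper itself omits the proof entirely, stating only that it is analogous to that of Theorem \ref{relation Poincare}. Your identification of $\Gamma_0(\mathfrak{n})$ with the case $\mathfrak{c}=\mathfrak{d}^{-1}$, $q=\delta^{-1}$ (so that $q\delta=1$ and the Kloosterman arguments reduce to $\nu,\varepsilon\mu\in\mathfrak{d}^{-1}$) is the natural way to see why the argument of Section~4 transfers verbatim, and the concern you flag about the $\mathrm{SL}_2$ versus $\mathrm{GL}_2^+$ unfolding is real but minor: the Fourier expansion of $P_\mu'$ still has the same Kloosterman--Bessel structure (as already implicitly used in the proof of Theorem \ref{Thm-SL}), with only the bookkeeping of the unit sum slightly adjusted.
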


\section{Adelic Hilbert Poincar\'e Series}

As in the elliptic case \cite{R}, we relate Hilbert Poincar\'e series to Hecke operators, for which it is necessary to work on the adelic setting. We briefly recall adelic Hilbert modular forms and refer to \cite{Sh} for further details.

Let $\hat{\mathcal{O}}=\prod_\mathfrak{p}\mathcal{O}_\mathfrak{p}$ and $\mathfrak{n}$ be an integral ideal. Let $\mathcal{S}_{k}(\mathfrak{n})$ be the space of cuspidal ad\'elic Hilbert modular forms of parallel weight $k$ with level
$$K_{0}(\mathfrak{n})=\left\{\left(\begin{array}{ll}
a & b \\
c & d
\end{array}\right) \in \mathrm{GL}_{2}(\hat{\mathcal{O}}): c \in \mathfrak{n} \hat{\mathcal{O}}\right\}.$$
Let $\{ t_{j} \}_{j =1}^{h^+}$ be finite ideles such that the fractional ideals
$\mathfrak{c}_{j}:=t_{j}\mathcal{O}$
form a complete set of representatives of the narrow class group of $F$. Set $\Gamma_{j}=\Gamma_{0}\left(\mathfrak{c}_{j}, \mathfrak{n}\right)$ and $\mathcal{S}_k (\mathfrak{n}) \simeq \bigoplus_{j=1}^{h^{+}} S_k \left(\Gamma_{j}\right)$ (see \cite{Sh}), that is, adelic Hilbert modular forms are $h^+$-tuples of classical Hilbert modular forms.

Under such isomorphisms, we may write an element $f \in \mathcal{S}_k(\mathfrak{n})$ as $f=\left(f_{j}\right)$ with $f_{j} \in$ $S_k\left(\Gamma_{j}\right)$. The Petersson inner product is defined componentwisely as 
$\langle f, h\rangle=\sum_{j}\left\langle f_{j}, h_{j}\right\rangle_{\Gamma_{j}}$.
For each integral ideal $\mathfrak{m}$, assuming that $\mathfrak{m}=\mu \mathfrak{c}_j^{-1}$ with $\mu \in F^{+}$, set
$c(\mathfrak{m}, f)= N(\mathfrak{c}_j)^{-\frac{k}{2}}a_{j}(\mu)$,
where $a_{j}(\mu)$ is the $\mu$-th Fourier coefficient of $f_{j}$. This is well-defined and called the $\mathfrak{m}$-th Fourier coefficient of $f$. 

The space $\mathcal{S}_k (\mathfrak{n})$ carries a Hecke theory. For each nonzero integral ideal $\mathfrak{m}$, there exists a Hecke operator $T_\mathfrak{m}$ on $\mathcal{S}_k (\mathfrak{n})$. The Hecke operators are multiplicative, commute with each other, and away from the level $\mathfrak{n}$, they are also normal.  Under the Petersson inner product, the adjoint operator of $T_\mathfrak{m}$ is denoted by $T_\mathfrak{m}^*$. The effect of the Hecke operators on Fourier coefficients is given by
$$c\left(\mathfrak{a}, T_{\mathfrak{m}}f\right)=\sum_{\mathfrak{r} \supset \mathfrak{a}+\mathfrak{m}}\chi_0(\mathfrak{r}) N(\mathfrak{r})^{k-1} c\left(\mathfrak{a} \mathfrak{m} \mathfrak{r}^{-2}, f\right),$$
where $\chi_0(\mathfrak{r})=1$ if $\mathfrak{r}$ is coprime to $\mathfrak{n}$ and $\chi_0(\mathfrak{r})=0$ otherwise.

Now we define the Hilbert Poincar\'e series associated to a nonzero integral ideal $\mathfrak{m}$. If $\mathfrak{m}=(\mu)\mathfrak{c}_j^{-1}$, $\mu \gg 0$, then $\mu\in\mathfrak{c}_j^{+}$ and we define $P_{\mathfrak{m}}(z;k,\mathfrak{n})$ to be 
$$\frac{N(\mathfrak{c}_j)^{\frac{k+2}{2}}D^{1/2}N(\mu)^{k-1}}{(4\pi)^{(1-k)n}\Gamma(k-1)^n} P_{\mu}(z;k,\mathfrak{c}_j,\mathfrak{n})$$
in the $j$-th component and $0$ for other components. Note that
$$\langle f,P_{\mathfrak{m}}(z;k,\mathfrak{n})\rangle =\langle f_j,P_{\mathfrak{m}}(z;k,\mathfrak{n})_j\rangle_{\Gamma_j}=c(\mathfrak{m}, f),$$
so in particular $\{ P_{\mathfrak{m}}(z;k,\mathfrak{n}) : \mathfrak{m}\}$ spans $\mathcal{S}_k (\mathfrak{n})$. 

\begin{Lem}\label{action adelic Hecke}
Let $\mathfrak{m},\mathfrak{q}$ be nonzero integral ideals. Then

(1) $T_{\mathfrak{m}}^*P_{\mathfrak{q}}(z;k,\mathfrak{n})=T_{\mathfrak{q}}^*P_{\mathfrak{m}}(z;k,\mathfrak{n})$.

(2) If $\mathfrak{m}+\mathfrak{q}=\mathcal{O}$, then $T_{\mathfrak{m}}^*P_{\mathfrak{q}}(z;k,\mathfrak{n})=P_{\mathfrak{mq}}(z;k,\mathfrak{n})$.
\end{Lem}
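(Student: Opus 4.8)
The plan is to prove both parts directly from the explicit formula for the action of Hecke operators on Fourier coefficients, using the Petersson duality that characterizes the Poincar\'e series $P_{\mathfrak{m}}(z;k,\mathfrak{n})$. The key observation is that, since $\{P_{\mathfrak{m}}\}$ spans $\mathcal{S}_k(\mathfrak{n})$ and $\langle f, P_{\mathfrak{m}}\rangle = c(\mathfrak{m},f)$ for all $f$, it suffices to compare the two sides by pairing against an arbitrary $f\in\mathcal{S}_k(\mathfrak{n})$ and checking that the resulting Fourier coefficients agree. For part (1), I would compute
\[
\langle f, T_{\mathfrak{m}}^* P_{\mathfrak{q}}\rangle = \langle T_{\mathfrak{m}} f, P_{\mathfrak{q}}\rangle = c(\mathfrak{q}, T_{\mathfrak{m}}f),
\]
and then substitute the explicit formula
\[
c(\mathfrak{q}, T_{\mathfrak{m}}f)=\sum_{\mathfrak{r}\supset \mathfrak{q}+\mathfrak{m}} \chi_0(\mathfrak{r}) N(\mathfrak{r})^{k-1} c(\mathfrak{q}\mathfrak{m}\mathfrak{r}^{-2}, f).
\]

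The main point is then that this double-indexed sum is symmetric in $\mathfrak{m}$ and $\mathfrak{q}$: the constraint $\mathfrak{r}\supset \mathfrak{q}+\mathfrak{m}$ (i.e. $\mathfrak{r}\mid \mathfrak{q}+\mathfrak{m}$, so $\mathfrak{r}$ divides both $\mathfrak{q}$ and $\mathfrak{m}$), the weight $N(\mathfrak{r})^{k-1}$, the character value $\chi_0(\mathfrak{r})$, and the argument $\mathfrak{q}\mathfrak{m}\mathfrak{r}^{-2}$ are all invariant under interchanging $\mathfrak{m}$ and $\mathfrak{q}$. Hence $c(\mathfrak{q},T_{\mathfrak{m}}f)=c(\mathfrak{m},T_{\mathfrak{q}}f)=\langle f, T_{\mathfrak{q}}^*P_{\mathfrak{m}}\rangle$ for every $f$. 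Since the inner products against all $f$ determine an element of $\mathcal{S}_k(\mathfrak{n})$ uniquely (the pairing is nondegenerate on the cuspidal space), we conclude $T_{\mathfrak{m}}^*P_{\mathfrak{q}}=T_{\mathfrak{q}}^*P_{\mathfrak{m}}$. I expect this symmetry bookkeeping to be the only delicate step, essentially verifying that the divisor sum is genuinely symmetric once one rewrites $\mathfrak{r}\supset\mathfrak{q}+\mathfrak{m}$ as $\mathfrak{r}\mid\gcd(\mathfrak{q},\mathfrak{m})$.

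For part (2), I would again pair against arbitrary $f$ and use $\langle f, T_{\mathfrak{m}}^*P_{\mathfrak{q}}\rangle = c(\mathfrak{q},T_{\mathfrak{m}}f)$. When $\mathfrak{m}+\mathfrak{q}=\mathcal{O}$, the greatest common divisor of $\mathfrak{m}$ and $\mathfrak{q}$ is $\mathcal{O}$, so in the divisor sum above the only ideal $\mathfrak{r}$ with $\mathfrak{r}\mid\mathfrak{q}$ and $\mathfrak{r}\mid\mathfrak{m}$ is $\mathfrak{r}=\mathcal{O}$. The sum therefore collapses to its single term $\mathfrak{r}=\mathcal{O}$, giving $N(\mathcal{O})^{k-1}\chi_0(\mathcal{O})\,c(\mathfrak{q}\mathfrak{m},f)=c(\mathfrak{mq},f)=\langle f, P_{\mathfrak{mq}}\rangle$. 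As this holds for all $f$, nondegeneracy of the Petersson pairing yields $T_{\mathfrak{m}}^*P_{\mathfrak{q}}=P_{\mathfrak{mq}}$. Throughout, the only facts used beyond the displayed Hecke formula are that $\langle f,P_{\mathfrak{m}}\rangle=c(\mathfrak{m},f)$ and that the Poincar\'e series span $\mathcal{S}_k(\mathfrak{n})$, both already established in the excerpt; the argument is otherwise purely formal, so I anticipate no serious obstacle beyond the coprimality reduction in part (2) and the symmetry check in part (1).
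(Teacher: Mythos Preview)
Your proposal is correct and follows essentially the same argument as the paper's proof: both pair against an arbitrary $f\in\mathcal{S}_k(\mathfrak{n})$, use $\langle f,T_{\mathfrak{m}}^*P_{\mathfrak{q}}\rangle=c(\mathfrak{q},T_{\mathfrak{m}}f)$, insert the explicit Hecke formula, and observe the symmetry in $\mathfrak{m}$ and $\mathfrak{q}$ for part~(1) and the collapse to $\mathfrak{r}=\mathcal{O}$ for part~(2). Your write-up is slightly more explicit about invoking nondegeneracy of the Petersson pairing, but the substance is identical.
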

\begin{proof} Denote $P_\mathfrak{m}=P_{\mathfrak{m}}(z;k,\mathfrak{n})$. For any $f \in \mathcal{S}_{k}(\mathfrak{n})$, we have 
\begin{align*}
\langle f,T_{\mathfrak{m}}^* P_{\mathfrak{q}}\rangle  &=\langle T_{\mathfrak{m}}f,P_{\mathfrak{q}}\rangle =c(\mathfrak{q},T_{\mathfrak{m}}f)
=\sum_{\mathfrak{m}+\mathfrak{q}\subset\mathfrak{r}}\chi_0(\mathfrak{r})N(\mathfrak{r})^{k-1}c(\mathfrak{m}\mathfrak{q}\mathfrak{r}^{-2},f),
\end{align*}
which is clearly symmetric in $\mathfrak{m}$ and $\mathfrak{q}$. Therefore, $\langle f, T_{\mathfrak{m}}^* P_{\mathfrak{q}}\rangle=\langle f, T_{\mathfrak{q}}^* P_{\mathfrak{m}}\rangle$ and part (1) follows from this.

When $\mathfrak{m}+\mathfrak{q}=\mathcal{O}$, the above formula gives $\langle f, T_{\mathfrak{m}}^* P_{\mathfrak{q}}\rangle= c(\mathfrak{m}\mathfrak{q},f)=\langle f,P_{\mathfrak{mq}}\rangle$, and part (2) follows.
\end{proof}

\begin{Thm}\label{Adelic-Vanishing} Let $\mathfrak{m}$ and $\mathfrak{q}$ be nonzero integral ideals.

(1) If $\mathfrak{m}+\mathfrak{q}=\mathcal{O}$ and $P_{\mathfrak{m}}(z;k,\mathfrak{n}) \equiv 0$, then $P_{\mathfrak{mq}}(z;k,\mathfrak{n}) \equiv 0$.

(2) $P_{\mathfrak{m}}(z;k,\mathfrak{n}) \equiv 0$ if and only if  $T_{\mathfrak{m}}=0$.
\end{Thm}
\begin{proof}
Part (1) follows immediately from part (2) of Lemma \ref{action adelic Hecke}. As for part (2), if  $P_{\mathfrak{m}}(z;k,\mathfrak{n}) \equiv 0$, then $T_{\mathfrak{m}}^*P_\mathfrak{q}(z;k,\mathfrak{n})=0$ for any nonzero integral ideal $\mathfrak{q}$ by part (1) of Lemma \ref{action adelic Hecke}. Since Hilbert Poincar\'e series span the whole space, we have $T_\mathfrak{m}^*=0$ and hence $T_\mathfrak{m}=0$. Conversely, if $T_{\mathfrak{m}}=0$, we have $T_{\mathfrak{m}}^*=0$ and then 
$P_{\mathfrak{m}}(z;k,\mathfrak{n})=T_{\mathfrak{m}}^* P_{\mathcal{O}}(z;k,\mathfrak{n})=0$ as desired.
\end{proof} 

More generally, we see that Poincar\'e series and Hecke operators possess same linear relations.

\begin{Cor}
For finitely many nonzero integral ideals $\mathfrak{m}_i$ and complex numbers $\lambda_i$, we have $\sum_i \lambda_i P_{\mathfrak{m}_i}(z;k,\mathfrak{n}) \equiv 0$ if and only if  $\sum_i \overline{\lambda_i} T_{\mathfrak{m}_i}=0$. 
\end{Cor}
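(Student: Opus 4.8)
The plan is to collapse the whole linear combination into a single Hecke operator and then reduce to the cyclicity of the trivial Poincar\'e series, exactly in the spirit of Theorem~\ref{Adelic-Vanishing}(2). Set $S=\sum_i\overline{\lambda_i}T_{\mathfrak{m}_i}$, so that, since taking adjoints conjugates scalars, $S^*=\sum_i\lambda_i T_{\mathfrak{m}_i}^*$. First I would record the identity
$P_{\mathfrak{m}}(z;k,\mathfrak{n})=T_{\mathfrak{m}}^*P_{\mathcal{O}}(z;k,\mathfrak{n})$
used in the proof of Theorem~\ref{Adelic-Vanishing}(2); it follows from Lemma~\ref{action adelic Hecke}(1) applied with $\mathfrak{q}=\mathcal{O}$ together with the fact that $T_{\mathcal{O}}$ is the identity operator (the $\mathfrak{a}=\mathcal{O}$ case of the Hecke action on coefficients gives $T_{\mathcal{O}}=\mathrm{id}$). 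Summing against the scalars $\lambda_i$ then yields
$$\sum_i\lambda_i P_{\mathfrak{m}_i}(z;k,\mathfrak{n})=\sum_i\lambda_i T_{\mathfrak{m}_i}^*P_{\mathcal{O}}(z;k,\mathfrak{n})=S^*P_{\mathcal{O}}(z;k,\mathfrak{n}),$$
so the corollary is equivalent to the assertion that $S^*P_{\mathcal{O}}\equiv 0$ if and only if $S=0$ (equivalently $S^*=0$).

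The implication $S=0\Rightarrow S^*P_{\mathcal{O}}=0$ is immediate, so the content is the converse. Assuming $S^*P_{\mathcal{O}}=0$, I would show $P_{\mathcal{O}}$ is a cyclic vector for the adjoint Hecke action. Since each $T_{\mathfrak{m}_i}$ commutes with every $T_{\mathfrak{q}}$, the operator $S$ commutes with $T_{\mathfrak{q}}$, and hence $S^*$ commutes with $T_{\mathfrak{q}}^*$. Using $P_{\mathfrak{q}}=T_{\mathfrak{q}}^*P_{\mathcal{O}}$ once more,
$$S^*P_{\mathfrak{q}}=S^*T_{\mathfrak{q}}^*P_{\mathcal{O}}=T_{\mathfrak{q}}^*S^*P_{\mathcal{O}}=0$$
for every nonzero integral ideal $\mathfrak{q}$. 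Because $\{P_{\mathfrak{q}}(z;k,\mathfrak{n}):\mathfrak{q}\}$ spans $\mathcal{S}_k(\mathfrak{n})$, this forces $S^*=0$ on the whole space, whence $S=0$.

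The hard part, such as it is, will be the commutativity step that propagates the vanishing of $S^*$ on the single vector $P_{\mathcal{O}}$ to the vanishing on all $P_{\mathfrak{q}}$: one must observe that a family of pairwise commuting operators has pairwise commuting adjoints (from $(AB)^*=B^*A^*$ and $AB=BA$), and invoke that the Hecke operators $T_{\mathfrak{m}}$ commute. An essentially equivalent route avoids adjoints entirely by working with Fourier coefficients: using $\langle f,P_{\mathfrak{m}}\rangle=c(\mathfrak{m},f)$ and $c(\mathfrak{m},f)=c(\mathcal{O},T_{\mathfrak{m}}f)$ (the $\mathfrak{a}=\mathcal{O}$ case of the Hecke action on coefficients), the relation $\sum_i\lambda_i P_{\mathfrak{m}_i}\equiv 0$ becomes $c(\mathcal{O},Sf)=0$ for all $f$; then $c(\mathfrak{q},Sf)=c(\mathcal{O},T_{\mathfrak{q}}Sf)=c(\mathcal{O},ST_{\mathfrak{q}}f)=0$ for all $\mathfrak{q}$ and $f$, and since a cusp form is determined by its Fourier coefficients one concludes $Sf=0$ for all $f$, i.e.\ $S=0$. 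Either way no new estimates are needed; the corollary is a formal consequence of Lemma~\ref{action adelic Hecke}, the spanning property of the $P_{\mathfrak{q}}$, and the commutativity of the Hecke algebra.
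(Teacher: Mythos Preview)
Your proof is correct and follows essentially the same strategy as the paper, which simply says ``the proof is the same as that of part (2) of Theorem~\ref{Adelic-Vanishing}''. The only cosmetic difference is that where you propagate $S^*P_{\mathcal{O}}=0$ to $S^*P_{\mathfrak{q}}=0$ via commutativity of the Hecke algebra, the paper's implicit argument uses Lemma~\ref{action adelic Hecke}(1) directly to write $\sum_i\lambda_i T_{\mathfrak{m}_i}^*P_{\mathfrak{q}}=T_{\mathfrak{q}}^*\bigl(\sum_i\lambda_i P_{\mathfrak{m}_i}\bigr)=0$, bypassing the need to invoke commutativity; both routes are equivalent and immediate.
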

\begin{proof}
The proof is the same as that of part (2) of Theorem \ref{Adelic-Vanishing}.
\end{proof}

\end{document}